\newcommand{\myemph}[1]{\uline{#1}}
\newcommand{\bpxcoef}{\mu_{\ell,\delta}}
\title[Low rank tensor approximation of singularly perturbed 
  PDEs in one dimension]{Low rank tensor approximation of singularly perturbed partial
  differential equations in one dimension}
\author{Carlo Marcati$^\star$} 
\email{carlo.marcati@sam.math.ethz.ch}
\author{Maxim Rakhuba$^\dagger$}
\email{mrakhuba@hse.ru}
\author{Johan E.~M.~Ulander$^\star$}
\email{ulanderj@student.ethz.ch}
\address{$^\star$Seminar for
Applied Mathematics, ETH Zürich, 8092 Zürich, Switzerland}
\address{$^\dagger$National Research University Higher School of Economics,
  109028 Moscow, Russia}
\newcommand{\range}[1]{\{1, \dots, #1\}}
\newcommand{\rangezero}[1]{\{0, \dots, #1\}}
\newcommand{\ud}{u_\delta}
\newcommand{\vd}{v_\delta}
\newcommand{\ed}{e_\delta}
\newcommand{\udqtt}{u_\delta^{\mathrm{qtt}}}
\newcommand{\udqttvec}{u_{\mathrm{vec},\delta}^{\mathrm{qtt}}}
\newcommand{\hp}{\mathrm{hp}}
\newcommand{\udp}{u_{\delta, p}}
\newcommand{\udL}{u_{\delta, L}}
\newcommand{\vdL}{v_{\delta, L}}
\newcommand{\PiLtwo}{\Pi_L^{L^2}}
\newcommand{\Cstab}{C_{\mathrm{stab}}}
\newcommand{\Ndof}{N_{\mathrm{dof}}}
\newcommand{\cO}{\mathcal{O}}
\newcommand{\epstol}{\epsilon_{\mathrm{tol}}}
\newcommand{\TunifL}{\mathcal{T}_L}
\newcommand{\TtildeL}{\mathcal{T}^{\mathrm{int}}_L}
\newcommand{\TunifintL}{\mathcal{T}^{\mathrm{int}}_L}
\newcommand{\tA}{\widetilde{A}}
\newcommand{\tf}{\widetilde{f}}
\newtheorem{theorem}{Theorem}
\newtheorem{lemma}{Lemma}
\newtheorem{corollary}[theorem]{Corollary}
\newtheorem{proposition}{Proposition}
\newtheorem{definition}{Definition}
\newtheorem{example}{Example}
\let\epsilon\varepsilon
\let\phi\varphi
\begin{document}
\begin{abstract}
 We derive rank bounds on the quantized tensor train (QTT) compressed approximation of singularly
 perturbed reaction diffusion partial differential equations (PDEs) in one dimension. Specifically, we show
 that, independently of the scale of the singular perturbation parameter, a
 numerical solution with accuracy $0<\epsilon<1$ can be represented in QTT
 format with a number of parameters that depends only polylogarithmically on
 $\epsilon$. In other words, QTT compressed solutions converge exponentially to
 the exact solution, with respect to a root of the number of parameters.
 We also verify the rank bound estimates numerically, and overcome known stability issues of
 the QTT based solution of PDEs by adapting a preconditioning strategy to obtain stable schemes at all scales.
  We find, therefore, that the QTT based strategy is a
  rapidly converging algorithm for the solution of singularly perturbed PDEs,
  which does not require prior knowledge on the scale of the singular perturbation
  and on the shape of the boundary layers.
\end{abstract}
\subjclass[2020]{15A69, 35A35, 35J25, 41A25, 65N30}
\keywords{Singular perturbation, low rank tensor approximation, tensor train,
  exponential convergence}
\maketitle

{\small
\tableofcontents
}

\section{Introduction}
\label{sc:intro_TT}
The solution of singularly perturbed elliptic differential equations constitutes
a challenge for numerical approximation. The solutions to such problems exhibit
\myemph{boundary layers}, whose correct resolution is crucial to the
accurate approximation of the problem. Since those layers can get arbitrarily
small, and the variations in the gradient of the solution can get consequentially
highly concentrated in space, the accurate solution of singularly perturbed problems, by,
e.g., low-order finite element (FE) methods requires a computationally
demanding  number of degrees of freedom. For this reason, more effective methods
have been introduced, such as $hp$-FE methods \cite{Schwab1996}, see also
\cite[Chapter 3]{Schwab1998} and \cite{MelenkPDE}, where some
\textit{a priori} knowledge on the solution is exploited to construct numerical methods requiring a
smaller computational effort. In some instances, especially in high dimension,
the implementation of such methods can still be cumbersome.

In this paper, we discuss and analyze the numerical solution of one-dimensional,
singularly perturbed elliptic equations in tensor-compressed format. Specifically,
we formally approximate the problem using low order, piecewise linear finite elements and
compress the resulting algebraic problem (neither the right-hand side, nor the matrix of this system are formed explicitly in computations) using the quantized tensor train (QTT)
method \cite{MatApproxTT, KhoromskijQuanticsApprox}.  By doing this, we obtain
an approximation accuracy comparable with that of a low-order finite element on
a very fine grid (the so-called \textit{virtual grid}), but we only need a
significantly  smaller number of degrees of freedom to compute and represent the
solution.
We also remark that the QTT based approach to the solution of singularly
perturbed problems does not require \textit{a priori} knowledge on the scale of
the singular perturbation, nor on the explicit form of the layers, as for, e.g.,
enriched spectral methods \cite{Checkroun2020}.

\subsection{Contributions of this paper}
First, we show theoretically
and verify numerically that, for all $0<\epsilon<1$, we can
obtain a QTT approximation of the solution with accuracy $\epsilon>0$,
and that we can represent it with $\cO(\left|\log\epsilon\right|^\kappa)$
parameters, with $\kappa=3$. For a given accuracy, the number of parameters of
the QTT approximation is independent of the singular perturbation parameter.
This is the main theoretical contribution of this
paper, and it is stated in Theorem \ref{QTTapprox}.
Furthermore, this is a theoretical
upper bound: we find, in numerical experiments, that $\kappa$ can be smaller
in practice.

The second contribution of this paper is the adaptation of the preconditioner
introduced in \cite{BachmayrStability} to the singularly perturbed case. The
straightforward application of classic solvers (DMRG \cite{White1992}, AMEn \cite{AMEnLinearSystems}, etc.)
to the QTT formatted tends indeed to have stability issues, which greatly limit
the virtual grid sizes that can be used in practice. In \cite{BachmayrStability},
a BPX preconditioner was developed to overcome this issue; we adapt it to our
case, in order to obtain stable solutions for all values of the perturbation
parameter $0 < \delta < 1$. With this at hand, we are able to reach a virtual grid
size of around $2^{-50}$ and to accurately solve problems with $\delta = 10^{-16}$.
We remark that such an approximation, if represented as a full piecewise linear FE
function, would require approximately $10^{15}$ degrees of freedom, while it is easily
tractable in tensor-compressed, QTT format.

\subsection{Tensor compressed solution of PDEs}
Historically, the first appearance of tensor
decomposition dates back to F. Hitchcook in \cite{Hitchcook}.
In more recent years, a wide range of tensor decompositions have appeared and
have been applied to many fields of science and engineering, see \cite{Kolda2009}.
The tensor train (TT) decomposition, specifically, was introduced in
\cite{OseledetsTTdec} as an easy to construct low-rank decomposition of
high-dimensional matrices and has its roots in matrix product states
representations in physics \cite{Schollwock2011}. 
Shortly later, it was realized
that low-rank tensor representations can handle certain
low-di\-men\-sion\-al partial differential equation (PDEs) that exhibit rough
behavior and are computationally challenging for conventional methods, through
so-called \myemph{quantization}. This refers to the process of reshaping
low-dimensional tensors with high mode sizes into high-dimensional tensor with
small mode size, then applying tensor decomposition. By combining quantization
and the TT representation, one obtains the QTT representation.

The QTT-formatted solution of PDEs proves useful only if the tensors involved
have small QTT-ranks: to theoretically analyse the rank behavior in the problem under
consideration, we approximate the solution with high-order piecewise
polynomials, then $L^2$-project the resulting approximation into the low-order
piecewise linear finite element space. We can then show that the FE function thus
constructed has low exact QTT-ranks (specifically, QTT-ranks that grow only
linearly with respect to the polynomial degree of the high-degree approximation).
The strategy used here is then partially different from the approach in
\cite{Kazeev2018, Marcati2019}, where the high-order piecewise polynomial was
interpolated. $L^2$-projections have the advantage, with respect to
interpolation operators, of being stable with respect to the $\delta$-dependent
norm we compute the error in. It is worth noting that, while the analysis in
multiple dimensions can be significantly more complex than the one presented
here, the strategy to obtain rank bounds can be extended to the
multi-dimensional case.

\subsection{Structure of the paper}
In Section \ref{statement} we introduce the singularly perturbed problem and the
functional setting of the paper.
Section \ref{sc:TT-theory} contains the main theoretical findings of this paper,
i.e., the rank bounds and error analysis for QTT compressed solution of the
singularly perturbed problem. Specifically, we show in Theorem \ref{QTTapprox}
that the number of parameters of the QTT representation of the solution grows
only polylogarithmically with respect to the approximation error.
In Section \ref{numerics} we discuss the numerical stability of the QTT
formatted problem and propose a preconditioning strategy whose
  implementation details are deferred to Appendix \ref{sec:assembly_prec}.
Finally, in Section
\ref{sc:TT-numerics} we present numerical experiments to verify the theoretical
results obtained in Section \ref{sc:TT-theory} and the role of preconditioning.
We conclude and discuss extensions of the present work in Section \ref{sec:conclusion}.
\section{Statement of the problem and notation}
\label{statement}
\subsection{Statement of the problem}
We consider the following
problem on the interval $I = (0,1)$
\begin{equation}
  \begin{aligned}\label{DiffEq}
    &- \delta^{2} u_{\delta}'' + c u_{\delta} = f
    \text{ in } I, \\
    &u_{\delta}(0) = \alpha_0,\, \ud(1) = \alpha_1,
\end{aligned}
\end{equation}
where $0 < \delta < 1$, $\alpha_1, \alpha_0 \in \mathbb{R}$, and where
\begin{equation}
  \label{eq:hypfc}
  f\text{ and }c \text{ are analytic in }\bar{I} = [0,1]\text{ and }
  c(x)\geq c_{\min{}} >0, \, \forall x\in I.
\end{equation}
For the weak formulation of problem
\eqref{DiffEq}, we introduce the Sobolev spaces
\begin{gather*}
  H^{1}(I) \coloneqq \left\{ u \in L^{2}(I) : u' \in L^{2}(I) \right\}, \qquad
  H^{1}_{0}(I) \coloneqq \left\{ u \in H^{1}(I) : u(0) = u(1) = 0 \right\},
\end{gather*}
and 
\begin{gather*}
H^{1}_{D}(I) \coloneqq \left\{ u \in H^{1}(I) : u( 0) = \alpha_0, u(1) = \alpha_1 \right\}.
\end{gather*}
The weak formulation of \eqref{DiffEq} reads then:
find $u_{\delta} \in H^{1}_{D}(I)$ such that
\begin{equation}\label{VarEq}
  a_\delta(\ud, v) \coloneqq \int_{I} \delta^{2} u_{\delta}' v' + \int_{I} c
u_{\delta} v = \int_{I} f v, \quad \forall v \in H^{1}_{0}(I),
\end{equation}
By Lax-Milgram's theorem, for all $\delta >0$, problem \eqref{VarEq} is well defined and has a unique solution.
We introduce, on $H^1(I)$, the $\delta$-dependent norm
\begin{equation}\label{eNorm}
  ||v||_{\delta} := \big( \delta^{2}
||v'||_{L^{2}(I)}^{2} + ||v||_{L^{2}(I)}^{2} \big)^{1/2}.
\end{equation}
We do our analysis in the energy norm just introduced. In the literature, the stronger,
\myemph{balanced norm} is sometimes instead considered, see \cite{Melenk2016}. An
analysis in this norm is out of the scope of the present paper.
\subsection{Notation}
We write $\mathbb{N} = \{1, 2, \dots\}$ for the set of positive natural numbers
and $\mathbb{N}_0  = \mathbb{N}\cup \{0\}$.
For $p\in \mathbb{N}_0$ and $\Omega\subset\mathbb{R}$, let $\mathbb{P}_{p}(\Omega)$ denote the space
of polynomials of degree at most $p$ defined on~$\Omega$.

We use the convention that capitalized letters are used for
multidimensional arrays and non-capitalized letters are used for vectors.

Throughout, if not stated otherwise, we use
the convention that $C>0$ denotes a generic constant independent of the singular
perturbation parameters $\delta$ and of the discretization. $C$ may change value without notice.

With the word tensor we indicate multidimensional arrays: a general
$d$-dimensional tensor is an element $A \in \mathbb{R}^{n_{1} \times \ldots
n_{d}}$, with $n_{i} \in \mathbb{N}$ for $i=1, \ldots ,d$, and requires storage
of order $\cO(n_{1} \cdot \ldots \cdot n_{d})$. 

\section{Low rank QTT approximation}
\label{sc:TT-theory}
In this section, we develop the error analysis and the rank bounds for the
low-rank QTT-formatted approximation of the solutions to \eqref{DiffEq}.
We construct the low-rank representation of solutions $\ud$ to \eqref{DiffEq} by
constructing a piecewise, high-order polynomial approximation to $\ud$, then
reapproximating it in a low-order finite element space, and finally QTT
compressing the resulting vector of coefficients.

We start  by introducing the TT and QTT representations in Section \ref{sec:QTT}.
In Section \ref{sec:polynomial-approximation}, then, we introduce the high-order and the low-order  finite element spaces. Finally, we derive rank bounds and estimate the
approximation error in Sections \ref{sec:rank-bounds} and \ref{sec:error-estimate}, respectively.
\subsection{Quantized Tensor Train}
\label{sec:QTT}
We now formalize the
concepts of \textit{tensor trains} \cite{OseledetsTTdec} and \textit{quantized tensor trains}.
\begin{definition}\label{TT-dec}
  A $d$-dimensional tensor $A \in
\mathbb{R}^{\overbrace{\scriptstyle{n \times \ldots \times
n}}^{d\text{ times}}}$ is said to admit a \myemph{TT-decomposition} if there exist
$\{r_j\}_{j=0}^d\in \mathbb{N}^{d+1}$ such that $r_0=r_{d}= 1$ and that there
exists, for all $j\in \range{d}$,
$V^{j}:\rangezero{n-1} \to \mathbb{R}^{r_{j-1} \times r_{j}}$ such that
\begin{equation}\label{tt-format}
  A(i_{1}, \ldots ,i_{d}) = V^{1}(i_{1}) \ldots
V^{d}(i_{d}),\qquad \forall (i_1, \dots, i_d) \in \rangezero{n-1}^d,
\end{equation}
The tensors $V^{j}$, seen as elements of $\mathbb{R}^{r_{j-1} \times
  n \times r_j}$, are the \myemph{TT-cores} of the decomposition, while
$\{r_j\}_{j=1}^{d-1}$ are the \myemph{TT-ranks} of the decomposition.
\end{definition}
We assume that $r_{2}, \ldots ,r_{d-1}=r \in \mathbb{N}$, for
ease of presentation. The storage required for the representation in
\eqref{tt-format} is of order $\cO(nr^{2}d)$, as each $V^{j}$ can be regarded as a
$3$-tensor in $\mathbb{R}^{r \times n \times r}$ and there are $d$ such
elements. The storage requirement of a $d$-tensor $A \in \mathbb{R}^{n \times
\ldots \times n}$ in the TT-format is polynomial in the dimension $d$, provided
the TT-ranks satisfy $r \leq C d^{k}$ for some $C,k>0$ independent of $d$,
instead of the exponential dependence $\cO(n^{d})$ on $d$ of storage of a tensor
represented as a $d$-dimensional array. 
 
We now introduce the quantized tensor train
(QTT) format. Let $u \in \mathbb{R}^{2^{L}}$, for $L \in \mathbb{N}$, and assume
it is indexed by $i=0, \ldots ,2^{L}-1$. Any index $i \in \{0, \ldots ,2^{L}-1
\}$ admits a binary representation; that is, there exist $i_{j} \in \{0,1\}$,
for $j=0, \ldots ,L-1$, such that
\begin{equation}\label{binRep}
  i = \sum_{j=1}^{L} 2^{L-j} i_{j}.
\end{equation}
We define, using the representation in equation \eqref{binRep},
the $L$-tensor $U \in \mathbb{R}^{2 \times \ldots \times 2}$ such that
\begin{equation}\label{inducedTensor}
  U(i_{1}, \ldots ,i_{L}) \coloneqq u(i), \qquad \forall i =  \sum_{j=1}^{L} 2^{L-j} i_{j} \in \rangezero{2^L-1}
\end{equation}
\begin{definition}\label{QTT-dec}
  A vector $u \in \mathbb{R}^{2^{L}}$ is said to
admit a \myemph{QTT decomposition} \cite{MatApproxTT,KhoromskijQuanticsApprox} if
the corresponding $L$-tensor $U \in \mathbb{R}^{2 \times \ldots \times 2}$
defined in equation \eqref{inducedTensor} admits a TT-decomposition as in
Definition \ref{TT-dec}. The TT-cores of the decomposition of $U$ are called the \myemph{QTT-cores} of
the decomposition of $u$ and the TT-ranks of the decomposition of $U$ are called
the \myemph{QTT-ranks} of the decomposition of $u$. The storage
requirement for such a QTT decomposition is of order $\cO(r^{2}L)$ where $r$ is
the maximal QTT-rank of the decomposition of~$u$.
\end{definition}
The setting that we are interested in is when $u \in
\mathbb{R}^{2^{L}}$ is the coefficient vector of a finite element (FE) function on
a uniform grid. The notion of QTT decompositions can be extended to include
functions $f \colon \mathbb{R} \to \mathbb{R}$:
\begin{definition}
  Let $I \subset \mathbb{R}$ be an open interval, and let
  $\mathcal{T} = \{x_{1} < \ldots < x_{2^{L}} \} \subset \overline{I}$.
  A function $f \colon I \to \mathbb{R}$, well defined at the points of $\mathcal{T}$, is said to admit a \myemph{QTT decomposition} with respect to $\mathcal{T}$ if the vector $v \in \mathbb{R}^{2^{L}}$ with elements
\begin{equation*}
  (v)_{i} = f(x_{i}), \qquad \forall i\in \range{2^L},
\end{equation*}
 admits a QTT-representation. The
QTT-cores of the QTT representation of $v$ are referred to as the \myemph{QTT-cores} of $f$ and the
QTT-ranks of the QTT representation of $v$ are referred to as the \myemph{QTT-ranks} of $f$.
\end{definition}
Certain functions admit exact decompositions in the 
QTT format on equispaced grids $\mathcal{T}$, with bounded ranks. Two fundamental examples of
such functions are given in Example \ref{ex1} and Example \ref{ex2}.
For $L \in
\mathbb{N}$, we denote by
\begin{equation}\label{grid1}
  \TunifL \coloneqq \left\{ x_j =  j \frac{1}{2^{L}+1}, \,
j\in \rangezero{2^{L}+1} \right\}
\end{equation}
the equispaced grid on $\overline{I} = [0,1]$ with $2^{L}+2$ uniformly
spaced grid points.
Also, let 
\begin{equation}\label{grid1int}
  \TunifintL \coloneqq \left\{ x_j =  j \frac{1}{2^{L}+1}, \,
j\in \range{2^{L}} \right\}
\end{equation}
\begin{example}\label{ex1}
  The exponential function $e^{-\alpha x}$, for $\alpha
\in \mathbb{R}$, admits a QTT decomposition with respect to $ \TunifintL$ with QTT-ranks $r=1$, where $L \in
\mathbb{N}$. Any $x \in \TunifintL$, can be written as $x = 
ih$, for some $i \in \range{2^L}$, where $h$ is the step size of $\TunifintL$. Expand $i=\sum_{j=1}^{L} 2^{L-j} i_{j}$, with $i_{j} \in \{ 0,1 \}$,
in its binary representation. Then we obtain
\begin{equation*}
  e^{-\alpha x} =  e^{- \alpha h \big(2^{L-1}
i_{1} + 2^{L-2} i_{2} + \ldots + 2^{1} i_{L-1} + 2^{0} i_{L} \big)} = e^{-\alpha h 2^{L-1} i_{1}} \cdots e^{- \alpha h i_{L}},\qquad \forall x\in \TunifintL,
\end{equation*}
which is a rank-$1$ QTT decomposition.
\end{example}
\begin{example}\label{ex2}
  Any polynomial function $M$ of degree $p$ admits a
  QTT-representation with respect to $\TunifintL$ with QTT-ranks $r=p+1$.
See, e.g., \cite[Theorem
6]{OseledetsConstrApp} for details on the decomposition and \cite{KhoromskijQuanticsApprox} for a proof of this result. A generalization to piecewise polynomials can be found in, e.g., \cite[Lemma 3.7]{SchwabPiecewisePoly} and is
included in the present manuscript as Lemma \ref{PiecewisePolyQTTLemma}.
\end{example}
We refer the reader to \cite{KhoromskijTensorCalc} for a comprehensive
presentation of QTT decomposition.

\subsection{Piecewise polynomial approximations}
\label{sec:polynomial-approximation}
\begin{definition} For any collection $\mathcal{T} = \{ \xi_{0}, \ldots ,\xi_{N} :0 = \xi_{0} < \xi_{1} < \ldots < \xi_{N} = 1 \} \subset \bar{I}$ of ordered
points of $\bar{I}$ and for a degree $p \in \mathbb{N}_{0}$,  we define
\begin{equation*}
  V(I, p, \mathcal{T}) := \{ v \in C(\bar{I})
: v_{|_{[\xi_{i-1},\xi_{i}]}} \in \mathbb{P}_{p}([\xi_{i-1},\xi_{i}]) \text{ for }
i=1, \ldots ,N,\ v(0) = \alpha_0,\, v(1) = \alpha_1 \},
\end{equation*}
and
\begin{equation*}
  V_{0}(I, p, \mathcal{T}) := \{ v \in
C(\bar{I}) : v_{|_{[\xi_{i-1},\xi_{i}]}} \in \mathbb{P}_{p}([\xi_{i-1},\xi_{i}]) 
\text{ for } i=1, \ldots ,N,\ v(0) = v(1)=0 \}.
\end{equation*}
\end{definition}
\subsubsection{High order finite element approximation}
\label{sec:high-order}
We now introduce high-order finite element approximation result that will be essential for
our main result. For all $\kappa >0 $, we denote by $\mathcal{T}^{\hp}_{\kappa} = \{ \xi_{0}, \xi_{1},
\xi_{2}, \xi_{3} \}$ the collection of points defined by
\begin{equation}\label{hpGrid}
  \xi_{0} = 0,\quad \xi_{1} =  \min \{ 0.25, \kappa
\},\quad \xi_{2} = 1 - \min \{ 0.25,\kappa \},\quad \xi_{3} = 1
\end{equation}
\begin{proposition}[{\cite[Proposition 2.2.5]{MelenkPDE}}]\label{hpFEMTheorem}
  Let $\ud$ be the solution to \eqref{DiffEq}. There exist $C, b, \lambda_0>0$
  independent of $\delta$
  such 
  that for every $\lambda \in (0, \lambda_{0})$ and for all $p\in \mathbb{N}_0$,
there exists $\udp \in V(I, p, \mathcal{T}_{\lambda p  \delta}^{\hp})$ such that
\begin{equation*}
  \|\ud - \udp\|_{\delta} \leq C e^{-b p}.
\end{equation*}
\end{proposition}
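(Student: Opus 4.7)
The plan is to combine a classical regularity decomposition of $\ud$ with element-wise polynomial approximation tailored to the three-element mesh $\mathcal{T}^{\hp}_{\lambda p \delta}$. I would first invoke the analytic decomposition
\begin{equation*}
  \ud = w_\delta + u^{\mathrm{BL}}_0 + u^{\mathrm{BL}}_1 + r_\delta,
\end{equation*}
where $w_\delta$ admits derivative bounds of the form $\|w_\delta^{(k)}\|_{L^\infty(I)} \leq C\gamma^{k}k!$ uniformly in $\delta$, the layer parts $u^{\mathrm{BL}}_0, u^{\mathrm{BL}}_1$ behave like $e^{-x/\delta}$ and $e^{-(1-x)/\delta}$ times analytic prefactors, and $r_\delta$ is exponentially small in $\delta$ in any derivative norm. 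Such decompositions are classical for reaction--diffusion problems with analytic data, cf.~\cite{MelenkPDE, Schwab1998}.

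Next, I would split into the regimes $\lambda p\delta \geq 0.25$ and $\lambda p\delta < 0.25$. In the first regime, $\delta$ is bounded below by a multiple of $p^{-1}$, so the problem is not genuinely singularly perturbed and standard analytic approximation of $\ud$ on the fixed mesh $\{0,\,0.25,\,0.75,\,1\}$ yields the bound directly. In the second regime the mesh is $\{0,\lambda p\delta,1-\lambda p\delta,1\}$, and I work piece by piece. On the middle element I approximate $w_\delta$ by its best polynomial of degree $p$ using the analytic bounds above; here the layer contributions are of size $\cO(e^{-\lambda p})$ in every $\delta$-weighted norm, since $e^{-\xi_1/\delta}=e^{-\lambda p}$ at the left endpoint. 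On each boundary element I rescale to $[0,1]$ via $\hat x = x/(\lambda p\delta)$: the layer becomes $e^{-\lambda p\hat x}$ times an analytic factor, and I build a polynomial approximant of degree $p$ with error $\cO(e^{-bp})$ in the scaled energy norm. Continuity at $\xi_1,\xi_2$ and the Dirichlet values at $0,1$ can be enforced by a small nodal correction that does not spoil the exponential bound.

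The main obstacle is the boundary-element estimate: one needs exponential convergence in $p$ for polynomial approximation of $e^{-\lambda p\hat x}$ on the reference interval, with a rate independent of $\delta$. This rests on choosing $\lambda_0$ small enough that, viewed as an entire function, $e^{-\lambda p\hat x}$ admits a Chebyshev/Legendre expansion whose coefficients decay at a fixed geometric rate; concretely, the exponent $b$ essentially depends on $\log(1/\lambda)$, and one takes $\lambda_0$ so that $b>0$. Carefully tracking the $\delta$-weighting in $\|\cdot\|_{\delta}$ under the rescaling, and summing the three element contributions together with the $\cO(e^{-\lambda p})$ cross-terms from the middle element and the $\delta$-exponentially small remainder $r_\delta$, then yields the stated bound; full details are in \cite[Proposition~2.2.5]{MelenkPDE}.
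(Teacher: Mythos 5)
The paper does not prove this proposition: it is imported verbatim from \cite[Proposition 2.2.5]{MelenkPDE}, so there is no in-paper argument to compare against. Your sketch is a correct outline of the standard $hp$-approximation proof from that source (and from \cite{Schwab1996, Schwab1998}): the analytic decomposition of $\ud$ into a smooth part, two boundary-layer terms, and an exponentially small remainder; the case split on whether $\lambda p\delta \geq 0.25$ (degenerate mesh, non-singular regime) or $\lambda p\delta < 0.25$ (three-element layer-adapted mesh); rescaling each boundary element so that the layer becomes $e^{-\lambda p\hat x}$ on the reference interval; the key lemma that polynomials of degree $p$ approximate $e^{-\lambda p\hat x}$ with geometric rate $e^{-bp}$ uniformly in $\delta$ once $\lambda$ is below a threshold $\lambda_0$; tracking the $\delta$-weights under rescaling; and the small nodal correction to enforce global continuity. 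This matches the cited argument in structure and in all essential technical steps.
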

\subsubsection{$\mathbb{P}_1$ finite element methods}
\label{sec:low-order}
We introduce here, and will use in QTT-compressed computations, a finite element (FE) method, with
piecewise linear basis functions.
We introduce the finite dimensional low-order FE spaces
\begin{equation*}
  V^{L} \coloneqq V(I, 1, \TunifL) \quad \text{and}\quad
  V^{L}_{0} \coloneqq V_0(I, 1, \TunifL).
\end{equation*}
The discretized version of the weak formulation in equation
\eqref{VarEq} reads: find $u_{L} \in V^{L}$ such that
\begin{equation}\label{FEMeq}
  \int_{I} \delta^2 u_{L}' v' + \int_{I} c u_{L} v =
\int_I f v,\qquad \forall v \in V^{L}_{0}.
\end{equation}
The problem in equation \eqref{FEMeq} can be written algebraically as
\begin{equation}\label{sysM}
  A_{L} w_L = f_{L}
\end{equation}
where
\begin{equation*}
  A_{L} \coloneqq \delta^{2} S_{L} + R_{L} \in \mathbb{R}^{2^{L}
    \times 2^{L}},\qquad w_L, f_L \in \mathbb{R}^{2^{L}}.
\end{equation*}
and where $S_{L}\in \mathbb{R}^{2^{L} \times 2^{L}}$ is the stiffness matrix
and $R_L\in \mathbb{R}^{2^{L} \times 2^{L}}$ is the matrix such that
\begin{equation*}
  (R_L)_{ij} = \int_I c \phi_i \phi_j, \qquad \forall i, j\in \range{2^L},
\end{equation*}
with $\{\phi_i\}_{i=1}^{2^L}$ being the Lagrange basis associated with $\TunifintL$.
$A_{L}$ is called the system matrix and $f_{L}$ is called the
load vector of the system in \eqref{FEMeq}. See, e.g., \cite{Ern2004, Brenner2008} for more
details on finite element methods.

Let us now introduce the $\mathbb{P}_{1}$-FE Galerkin projection
$\Pi_{L} \colon H^{1}(I) \to V^{L}$ such that, for all $v\in H^1(I)$,
\begin{equation}\label{GalerkinOrtho}
  a_{\delta}(\Pi_{L}v - v,v_{L}) = 0,\qquad \forall v_{L} \in V^{L}.
\end{equation}
We also introduce the $L^2(I)$ projection $\PiLtwo : L^2(I)\to V^L$, such that,
for all $v\in L^2(I)$,
\begin{equation}
  \label{eq:PiLtwo}
  \int_I\left(\PiLtwo v - v \right) v_L = 0, \qquad \forall v_L\in V^L.
\end{equation}
We remark that the $L^2(I)$ projection is stable with respect to the $H^1(I)$
and $L^2(I)$ norms, hence, there exists a positive constant $\Cstab$ such that, for
all $0<\delta<1$ and for all $L\in\mathbb{N}$,
\begin{equation}
  \label{eq:PiLtwostability}
  \| \PiLtwo v \|_{\delta}\leq \Cstab \|v\|_\delta, \qquad \forall v\in H^1(I).
\end{equation}
\subsection{Rank bounds for QTT approximation}
\label{sec:rank-bounds}
We now wish to establish that there exists a constant $C>0$ independent of $\delta$
such that for all $L\in \mathbb{N}$, $\PiLtwo \udL$ admits a QTT representation with QTT-ranks bounded by
$CL$. First, recall the following lemma on exact
low-rank QTT-representation of piecewise polynomials.
\begin{lemma}[{\cite[Lemma
    3.7]{SchwabPiecewisePoly}}]\label{PiecewisePolyQTTLemma}
  Let $L, M \in
\mathbb{N}$, $p_{1}, \ldots ,p_{M} \in \mathbb{N}_{0}$ and let $x_{0}, \ldots
,x_{M} \in \mathbb{R}$ be such that $0 = x_{0} < \ldots < x_{M} = 2^{L}-1$.
Consider a function $u$ such that $u$ is equal to a polynomial $P_{m}$ of degree
$p_{m}$ in $[x_{m-1},x_{m})$ for $1 \leq m \leq M$ and such that $u(x_{M}) =
P_{M}(x_{M})$. Then the $2^{L}$-component vector $\mathbf{u} = (u_{0}, \ldots
,u_{2^{L}-1})$ with $u_{i} = u(i)$ for $i=0, \ldots ,2^{L}-1$ has a QTT
representation with ranks bounded by $P+M$, where $P= \max \{ p_{1}, \ldots
,p_{M} \} \in \mathbb{N}_{0}$.
\end{lemma}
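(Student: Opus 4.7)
My plan is to bound, for every cut level $j \in \range{L-1}$, the rank of the $j$-th unfolding matrix $U^{(j)}$ of the $L$-tensor obtained by reshaping $\mathbf{u}$ as in \eqref{inducedTensor}. By the TT-SVD construction of \cite{OseledetsTTdec}, this rank is an upper bound on the $j$-th QTT-rank of $\mathbf{u}$, so controlling $\operatorname{rank} U^{(j)}$ uniformly in $j$ will give the claimed bound.

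The main idea I would exploit is the dyadic block structure inherited from the binary expansion \eqref{binRep}. Partitioning $\rangezero{2^L-1}$ into $2^j$ consecutive blocks $B_k = \{k\cdot 2^{L-j}, \ldots, (k+1)\cdot 2^{L-j}-1\}$, the row of $U^{(j)}$ labeled by $k$ is precisely the restriction of $\mathbf{u}$ to $B_k$. I would call $B_k$ \emph{pure} if $u$ coincides with a single polynomial piece $P_m$ on $B_k$, and \emph{mixed} otherwise. A mixed block must contain some interior breakpoint $x_m$ ($1\le m\le M-1$) strictly between its first and last integer point, so at most $M-1$ of the $2^j$ rows are mixed. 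For a pure block with $u|_{B_k}=P_m$, the row is $\{P_m(k\cdot 2^{L-j}+s)\}_{s=0}^{2^{L-j}-1}$, which, viewed as a function of $s$, is a polynomial of degree at most $P$; hence every pure row lies in the fixed $(P+1)$-dimensional subspace spanned by the evaluations of $1, s, s^2, \ldots, s^P$ on $\rangezero{2^{L-j}-1}$. Combining, the row space of $U^{(j)}$ is contained in the sum of this $(P+1)$-dimensional polynomial subspace and the span of at most $M-1$ mixed rows, yielding $\operatorname{rank} U^{(j)} \leq P+M$ and hence the QTT-rank bound.

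The main nuisance I expect is bookkeeping at the dyadic boundaries: a breakpoint $x_m$ coinciding with the boundary between two blocks produces no mixed block at that level, and when $2^{L-j}\le P$ the polynomial-evaluation subspace has dimension smaller than $P+1$. Both situations only improve the estimate, but each needs a careful count to be sure the $P+M$ bound is maintained without off-by-one slips. A cleaner, fully constructive alternative would be to glue together the rank-$(p_m+1)$ QTT-cores of each polynomial piece from Example \ref{ex2} by inserting an extra ``selector'' rank that tracks which of the $M$ pieces the current multi-index falls in; this would yield an explicit TT representation at the price of a significantly longer and more notation-heavy argument, so I would only adopt it if an explicit core-level construction is needed downstream.
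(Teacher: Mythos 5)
Your unfolding-rank argument is correct and is the standard way to prove this bound. Identifying row $k$ of the $j$-th unfolding with the restriction of $\mathbf{u}$ to the dyadic block $B_k$, noting that pure rows lie in the at-most-$(P{+}1)$-dimensional space of degree-$\le P$ polynomial samples on $\rangezero{2^{L-j}-1}$, and charging each mixed row to one of the $\le M-1$ interior breakpoints (each breakpoint landing in exactly one block) gives $\operatorname{rank}U^{(j)}\le (P{+}1)+(M{-}1)=P+M$, which TT-SVD then converts into the QTT-rank bound. One word of care on the phrasing: a mixed block need not contain a breakpoint ``strictly between its first and last integer point'' (a breakpoint coinciding with the block's last integer still makes it mixed), but the count survives because each interior breakpoint is assigned to the unique block containing it, and a block with no interior breakpoint is necessarily pure. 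Be aware that the paper itself offers no proof of this lemma — it is quoted verbatim from the reference (Kazeev--Schwab, Lemma 3.7) — so there is no in-paper argument to compare against; your unfolding-rank route is the one typically used in that literature, and the explicit ``selector''-rank core construction you mention as an alternative is the other standard option, useful mainly when one wants explicit cores downstream.
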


Next, we need three
auxiliary lemmas.
\begin{lemma}\label{polyLemma}
  For any $P \in \mathbb{P}_{p}(I)$, with $p \in
\mathbb{N}$ and $I \subset \mathbb{R}$, there exist $q_{1,j} \in
\mathbb{P}_{j}(I)$ and $q_{2,j} \in \mathbb{P}_{p-j}(I)$ for $j=0, \ldots ,p$
such that for every $x,y \in I$ with $x+y \in I$
\begin{equation*}
  P(x+y) = \sum_{j=0}^{p} q_{1,j}(x) q_{2,j}(y)
\end{equation*}
\end{lemma}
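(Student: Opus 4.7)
The plan is to reduce to the monomial basis and apply the binomial theorem, which will produce the required separation of variables in a completely explicit way.

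First, I would write $P$ in the monomial basis as $P(z) = \sum_{k=0}^{p} a_{k} z^{k}$ for suitable coefficients $a_{k}\in\mathbb{R}$. Setting $z = x+y$ and applying the binomial theorem gives
\begin{equation*}
  P(x+y) = \sum_{k=0}^{p} a_{k} (x+y)^{k} = \sum_{k=0}^{p} a_{k} \sum_{j=0}^{k} \binom{k}{j} x^{j} y^{k-j}.
\end{equation*}
This is already a sum of products of one-variable polynomials; what remains is only to rearrange it into the exact form required by the statement.

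Next, I would interchange the order of summation, using that $\{(k,j): 0\leq j\leq k\leq p\} = \{(k,j):0\leq j\leq p,\ j\leq k\leq p\}$, to obtain
\begin{equation*}
  P(x+y) = \sum_{j=0}^{p} x^{j} \left( \sum_{k=j}^{p} a_{k} \binom{k}{j} y^{k-j} \right).
\end{equation*}
I would then read off $q_{1,j}(x) \coloneqq x^{j}$ and $q_{2,j}(y) \coloneqq \sum_{k=j}^{p} a_{k} \binom{k}{j} y^{k-j}$, and verify that $q_{1,j}\in\mathbb{P}_{j}(I)$ (since it is a single monomial of degree $j$) and $q_{2,j}\in\mathbb{P}_{p-j}(I)$ (since the highest power of $y$ appearing is $y^{p-j}$, attained for $k=p$).

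There is no serious obstacle: the identity is essentially the Taylor expansion of $P$ about $y$ written in closed form, and its verification is a purely algebraic manipulation. The only point worth a line of care is the correct bookkeeping of the indexing after the swap of sums, to ensure that the degree bounds $j$ and $p-j$ are tight in the sense required by the statement, and that the hypothesis $x+y\in I$ is used only to guarantee that $P(x+y)$ is defined (the polynomial identity itself holds formally on all of $\mathbb{R}$).
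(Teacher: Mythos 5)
Your proof is correct, and it takes a different route from the paper's. The paper proves the (equivalent) claim $P(x+y)=\sum_{k=0}^{p} x^{k} q_{k}(y)$ with $q_k\in\mathbb{P}_{p-k}(I)$ by induction on $p$: it peels off the leading term $a_p(x+y)^p$, applies the induction hypothesis to the degree-$(p-1)$ remainder, and expands $(x+y)^p$ by the binomial theorem, merging the two contributions. You instead expand \emph{every} monomial $(x+y)^k$ by the binomial theorem at once and interchange the order of summation, obtaining the same closed-form factors $q_{1,j}(x)=x^j$ and $q_{2,j}(y)=\sum_{k=j}^{p}a_k\binom{k}{j}y^{k-j}$ directly. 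Your argument is shorter and fully explicit, avoiding the induction entirely; the paper's inductive structure is arguably more modular (it isolates the binomial theorem to a single application per step) but yields the same result with a bit more bookkeeping. Both establish exactly the degree bounds required, and your observation that the hypothesis $x+y\in I$ plays no role in the algebra (only in making $P(x+y)$ meaningful as a restriction) is accurate.
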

\begin{proof} We prove by induction over $p \in \mathbb{N}$ that for any
polynomial $P$ of degree $p$ there exist polynomials $q_{k}$ of degree $p-k$,
for $k=0, \ldots ,p$, such that the following equality holds
\begin{equation*}
  P(x+y) = \sum_{k=0}^{p} x^{k} q_{k}(y).
\end{equation*}
The base case $p=1$ is trivial. Suppose that the statement holds
for every polynomial of degree $p-1$. Let $P(x) = \sum_{j=0}^{p} a_{j} x^{j}$ be
a polynomial of degree $p \in \mathbb{N}$. We split the sum as
\begin{equation*}
  P(x+y) = \sum_{k=0}^{p-1} a_{k} (x+y)^{k} + a_{p} (x+y)^{p} =
\sum_{k=0}^{p-1} x^{k} q_{k}(y) + a_{p} (x+y)^{p},
\end{equation*}
where we used that $\sum_{k=0}^{p-1} a_{k} x^{k}$ is a
polynomial of degree $p-1$ and hence we can apply the induction hypothesis to
obtain such $q_{k} \in \mathbb{P}_{p-1-k}(I)$, for $k=0, \ldots ,p-1$. Then, by
the binomial theorem, we have that
\begin{equation*}
  (x+y)^{p} = \sum_{k=0}^{p} \binom{p}{k} x^{k} y^{p-k}.
\end{equation*}
Thus, if we let
\begin{equation*}
  \tilde{q}_{k}(y) \coloneqq q_{k}(y) + a_{p} \binom{p}{k}
y^{p-k} \in \mathbb{P}_{p-k}(I),\ k=0, \ldots ,p-1
\end{equation*}
\begin{equation*}
  \tilde{q}_{p}(y) \coloneqq a_{p} \in \mathbb{P}_{0}(I)
\end{equation*}
then we have
\begin{equation*}
  P(x+y) = \sum_{k=0}^{p} x^{k} \tilde{q}_{k}(y).
\end{equation*}
The assertion follows.
\end{proof}
\begin{lemma}\label{GPLemma1}
  Let $h > 0$ and let $p \in \mathbb{N}$. Suppose
that $\xi_{0} < \ldots . < \xi_{n+1}$ are such that $|\xi_{i+1}-\xi_{i}| > 2h$
for $i=0, \ldots ,n$ and suppose that $q \colon (\xi_{0}, \xi_{n+1}) \to
\mathbb{R}$ is such that $q \in \mathbb{P}_{p}((\xi_{i},\xi_{i+1}))$ for $i=0,
\ldots ,n$. Let $\hat{\phi} \colon (-1,1) \to \mathbb{R}$
and suppose there exists $k \in \mathbb{N}_0$ such that
\begin{equation*}
  \hat{\phi} \in \mathbb{P}_{k}((-1,0)),\ \hat{\phi} \in
\mathbb{P}_{k}((0,1)).
\end{equation*}
Then the function $\Psi:(\xi_0+h, \xi_{n+1}-h)\to \mathbb{R}$ such that
\begin{equation*}
  \Psi(y) = \int_{-1}^{1} \hat{\phi}(x) q(hx+y) dx, \qquad \forall y\in (\xi_0+h, \xi_{n+1}-h)
\end{equation*}
satisfies
\begin{enumerate}
    \item $\Psi \in \mathbb{P}_{p}((\xi_{i}+h,\xi_{i+1}-h))$ for all $i=0,
\ldots ,n$
    \item $\Psi \in \mathbb{P}_{p+k+1}(\xi_{i},\xi_{i}+h)$ for all $i=1, \ldots
,n$
    \item $\Psi \in \mathbb{P}_{p+k+1}((\xi_{i}-h,\xi_{i}))$ for all $i=1,
\ldots ,n$.
\end{enumerate}
\end{lemma}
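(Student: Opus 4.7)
The plan is to split, in each of the three cases, the $x$-integration interval $(-1,1)$ into subintervals on which both $\hat\phi$ is a single polynomial in $x$ and the map $x\mapsto q(hx+y)$ is a single polynomial in $hx+y$, then to separate $x$- and $y$-dependence via Lemma \ref{polyLemma} and read off the $y$-degree. Write $\hat\phi_-,\hat\phi_+$ for the polynomial branches of $\hat\phi$ on $(-1,0)$ and $(0,1)$, and $q_i^L,q_i^R\in\mathbb{P}_p$ for the polynomial branches of $q$ on the intervals immediately left and right of $\xi_i$.

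The core degree-counting device I would invoke repeatedly is the following. Given $P\in\mathbb{P}_p$, Lemma \ref{polyLemma} yields $P(hx+y)=\sum_{j=0}^{p}q_{1,j}(hx)\,q_{2,j}(y)$ with $\deg q_{1,j}=j$ and $\deg q_{2,j}=p-j$. Multiplying by $\hat\phi_\pm(x)$ and integrating in $x$ from $a(y)$ to $b(y)$, the coefficient multiplying $q_{2,j}(y)$ is $G_j(b(y))-G_j(a(y))$, where $G_j$ is an antiderivative of $\hat\phi_\pm(x)\,q_{1,j}(hx)$ and thus has $x$-degree $k+j+1$. Constant limits therefore contribute no $y$-degree, and the per-term total is at most $p-j\leq p$; an affine-in-$y$ limit contributes up to $(k+j+1)+(p-j)=p+k+1$.

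For claim (1), $y\in(\xi_i+h,\xi_{i+1}-h)$, the hypothesis $|\xi_{i+1}-\xi_i|>2h$ guarantees $hx+y\in(\xi_i,\xi_{i+1})$ for all $x\in(-1,1)$, so $q(hx+y)=q_i^R(hx+y)$; splitting only at $x=0$ to accommodate $\hat\phi$'s jump leaves constant limits, and the recipe yields $\Psi\in\mathbb{P}_p$. For claim (2), $y\in(\xi_i,\xi_i+h)$, the single $y$-dependent breakpoint of the integrand is $x^*(y):=(\xi_i-y)/h\in(-1,0)$; splitting at both $x^*(y)$ and $0$ and regrouping via $r:=q_i^L-q_i^R\in\mathbb{P}_p$ I would write
\begin{equation*}
  \Psi(y)=\int_{-1}^{0}\hat\phi_-(x)\,q_i^R(hx+y)\,dx+\int_{0}^{1}\hat\phi_+(x)\,q_i^R(hx+y)\,dx+\int_{-1}^{x^*(y)}\hat\phi_-(x)\,r(hx+y)\,dx.
\end{equation*}
The first two terms have constant limits, hence lie in $\mathbb{P}_p$; the third has upper limit $x^*(y)$ affine in $y$, hence lies in $\mathbb{P}_{p+k+1}$, and so does $\Psi$ on $(\xi_i,\xi_i+h)$. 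Claim (3) is symmetric: for $y\in(\xi_i-h,\xi_i)$ one has $x^*(y)\in(0,1)$, and the analogous regrouping (with $r=q_i^R-q_i^L$) isolates a single integral with $y$-dependent limit.

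The main (mild) obstacle I anticipate is the simultaneous bookkeeping of the two breakpoints, $x=0$ from $\hat\phi$ and $x^*(y)$ from $q$, and the regrouping that cleanly isolates the $y$-dependent integration limit into a single term. Once the regrouping is in place, the degree count is immediate from Lemma \ref{polyLemma} together with the antiderivative observation.
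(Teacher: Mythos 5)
Your proposal is correct and follows essentially the same route as the paper's proof: apply Lemma \ref{polyLemma} to separate the $x$- and $y$-dependence, split the $x$-integral at $0$ (the kink of $\hat\phi$) and at $x^*(y)=(\xi_i-y)/h$ (the kink of $q$ under the change of variables), and count degrees via the $(k+j+1)$-degree antiderivative composed with the affine limit. The only cosmetic difference is your regrouping via $r=q_i^L-q_i^R$, which collects the constant-limit pieces into a single $\mathbb{P}_p$ part and isolates the $y$-dependence into one integral, whereas the paper splits first at $x^*(y)$ into $c_1+c_2$ and then handles the $y$-dependent limits inside each term; both yield the same $p+k+1$ bound.
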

\begin{proof} Let $I_{i}=(\xi_{i},\xi_{i+1})$ and let $q^{i} \in
\mathbb{P}_{p}(I_{i})$ denote the polynomial such that $q_{|_{I_{i}}} = q^{i}$,
for $i=0, \ldots ,n$. By Lemma \ref{polyLemma}, there exist polynomials
\begin{equation*}
  q^{i}_{1,j} \in \mathbb{P}_{j}(I_{i}),\ q^{i}_{2,j} \in
\mathbb{P}_{p-j}(I_{i}),
\end{equation*}
for $i=0, \ldots ,n$ and for $j=0, \ldots ,p$, such that
\begin{equation*}
  q^{i}(x+y) = \sum_{j=0}^{p} q^{i}_{1,j}(x) q^{i}_{2,j}(y),\
i=0, \ldots ,n,
\end{equation*}
for every $x,y \in I_{i}$ such that $x+y \in I_{i}$. \\ We start
by proving Assertion 1. Fix $i \in \{ 0, \ldots ,n \}$ and let $y \in I_{i}$ be
such that $h x + y \in I_{i}$ for every $x \in (-1,1)$, or equivalently, $y \in
(\xi_{i} +h, \xi_{i+1}-h)$. Then we can write
\begin{equation*}
  \Psi(y) = \sum_{j=0}^{p} \left( \int_{-1}^{1} \hat{\phi}(x)
q^{i}_{1,j}(hx) dx \right) q^{i}_{2,j}(y) = \sum_{j=0}^{p} c_{j} q^{i}_{2,j}(y),
\end{equation*}
where
\begin{equation*}
  c_{j} \coloneqq \int_{-1}^{1} \hat{\phi}(x) q^{i}_{1,j}(hx) dx
\in \mathbb{R},\ j=0, \ldots ,p.
\end{equation*}
Thus, $\Psi \in \mathbb{P}_{p}((\xi_{i}+h,\xi_{i+1}-h))$, which
establishes Assertion 1. \\ We continue to prove Assertion 2 and 3. Fix $i \in
\{ 1, \ldots ,n \}$. Let $x \in (-1,1)$ and let $y \in (\xi_{i}-h,\xi_{i}+h)$.
Then $hx+y \in (\xi_{i}-2h,\xi_{i}+2h)$. As $(\xi_{i}-2h,\xi_{i}+2h) \subset
(\xi_{i-1},\xi_{i+1})$, we have that
\begin{equation*}
  q(hx + y) = \begin{cases} q^{(i-1)}(hx+y), & \text{for } hx+y
\leq \xi_{i} \\ q^{(i)}(hx+y), & \text{for } hx+y>\xi_{i}. \end{cases}
\end{equation*}
Observe that for $x \in (-1,1)$ and $y \in
(\xi_{i}-h,\xi_{i}+h)$
\begin{equation*}
  hx + y < \xi_{i} \iff x \in
\left(-1,\frac{\xi_{i}-y}{h}\right)
\end{equation*}
\begin{equation*}
  hx + y > \xi_{i} \iff x \in \left(\frac{\xi_{i}-y}{h},1
\right).
\end{equation*}
If we denote by
\begin{equation}\label{int-c1}
  c_{1}(y) \coloneqq \sum_{j=0}^{p} \left(
\int_{-1}^{\frac{\xi_{i}-y}{h}} \hat{\phi}(x) q^{i-1}_{1,j}(hx) dx \right)
q^{i-1}_{2,j}(y)
\end{equation}
\begin{equation*}
  c_{2}(y) \coloneqq \sum_{j=0}^{p} \left(
\int_{\frac{\xi_{i}-y}{h}}^{1} \hat{\phi}(x) q^{i}_{1,j}(hx) dx \right)
q^{i}_{2,j}(y),
\end{equation*}
then we can write
\begin{equation} \Psi(y) = \int_{-1}^{\frac{\xi_{i}-y}{h}} \hat{\phi}(x)
q^{i-1}(hx+y) dx + \int_{\frac{\xi_{i}-y}{h}}^{1} \hat{\phi}(x) q^{i}(hx +y) dx
= c_{1}(y) + c_{2}(y),
\end{equation}
Consider the case $y \in (\xi_{i} - h,\xi_{i})$. We prove that
$c_{1} \in \mathbb{P}_{p+k+1}((\xi_{i}-h,\xi_{i}))$. For all $y \in
(\xi_{i}-h,\xi_{i})$, we have that $\frac{\xi_{i}-y}{h} >0$ and so we split the
integral in equation \eqref{int-c1} into two parts
\begin{equation*}
  c_{1}(y) = \sum_{j=0}^{p} \left( \int_{-1}^{0} \hat{\phi}(x)
q^{i-1}_{1,j}(hx) dx + \int_{0}^{\frac{\xi_{i}-y}{h}} \hat{\phi}(x)
q^{i-1}_{1,j}(hx) dx \right) q^{i-1}_{2,j}(y).
\end{equation*}
For each $j=0, \ldots ,p$, the first integral is independent of
$y$
\begin{equation} \int_{-1}^{0} \hat{\phi}(x) q^{i-1}_{1,j}(hx) dx \in \mathbb{R}
\end{equation}
and the second integral is a polynomial of degree $j+k+1$ in $y$:
\begin{equation*}
  \int_{0}^{\frac{\xi_{i}-\cdot}{h}} \hat{\phi}(x)
q^{i-1}_{1,j}(hx) dx \in \mathbb{P}_{j+k+1} \left( (\xi_{i}-h,\xi_{i}) \right).
\end{equation*}
The latter follows from the fact that the integrand is a
polynomial of degree $j+k$ on $\left(0,\frac{\xi_{i}-y}{h} \right)$, as we have
that
\begin{equation*}
  \hat{\phi} \in \mathbb{P}_{k}\left(
\left(0,\frac{\xi_{i}-y}{h} \right) \right)
\end{equation*}
\begin{equation*}
  q^{i-1}_{1,j}(h \cdot) \in \mathbb{P}_{j}\left(
\left(0,\frac{\xi_{i}-y}{h} \right) \right).
\end{equation*}
We conclude that $c_{1} \in
\mathbb{P}_{p+k+1}((\xi_{i}-h,\xi_{i}))$. Similarly, $c_{2} \in
\mathbb{P}_{p+k+1}((\xi_{i}-h,\xi_{i}))$. It follows that $\Psi \in
\mathbb{P}_{p+k+1}((\xi_{i}-h,\xi_{i}))$, the desired property. The case $y \in
(\xi_{i},\xi_{i}+h)$ is proved analogously.
\end{proof}
\begin{lemma}\label{GPLemma2}
  Let $0<\delta<1$, and let $\ud$ be solution to \eqref{DiffEq} under the
  hypotheses \eqref{eq:hypfc}. Let $p\in \mathbb{N}$ and let $\udp$ be the
 approximation of $\ud$ as given in Proposition \ref{hpFEMTheorem}. Let $\{
\phi_{i} \}_{i=1}^{2^{L}}$ be the Lagrange basis associated to $V_0^L$. Then the
vector $v \in \mathbb{R}^{2^{L}}$ such that
\begin{equation*}
  v_{i} = \int_{I} \udp \phi_i, \qquad
i\in \range{2^L},
\end{equation*}
admits a QTT decomposition with QTT-ranks bounded by $p+9$.
\end{lemma}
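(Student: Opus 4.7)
The plan is to recognize each entry $v_i$ as the value at $x_i$ of a convolution-type function $\Psi$, use Lemma \ref{GPLemma1} to show that $\Psi$ is piecewise polynomial with few pieces of controlled degree, and then invoke Lemma \ref{PiecewisePolyQTTLemma} to convert this structure into a QTT-rank bound. To this end, I would introduce the reference hat function $\hat{\phi} \colon (-1,1) \to \mathbb{R}$, $\hat{\phi}(x) = 1 - |x|$, which is piecewise polynomial of degree $k = 1$ in the sense of Lemma \ref{GPLemma1}. Writing $h = 1/(2^L+1)$ for the mesh size of $\TunifL$ and changing variables $z = hx + x_i$ in each integral gives
\begin{equation*}
v_i \;=\; \int_I u_{\delta, p}(z)\, \phi_i(z) \, dz \;=\; h \int_{-1}^{1} \hat{\phi}(x)\, u_{\delta,p}(hx + x_i) \, dx \;=\; h\, \Psi(x_i), \qquad i \in \range{2^L},
\end{equation*}
where $\Psi(y) \coloneqq \int_{-1}^{1} \hat{\phi}(x)\, u_{\delta,p}(hx + y) \, dx$.

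Next, I would apply Lemma \ref{GPLemma1} with $q = u_{\delta,p}$, which by Proposition \ref{hpFEMTheorem} is a piecewise polynomial of degree at most $p$ on the three sub-intervals cut out by the four-node $hp$-mesh $\mathcal{T}^{\hp}_{\lambda p \delta} = \{\xi_0, \xi_1, \xi_2, \xi_3\}$ (so $n+1 = 3$, with two interior nodes). With $k = 1$, the lemma yields that $\Psi$ is a polynomial of degree at most $p$ on each of the $3$ interior sub-intervals $(\xi_i + h, \xi_{i+1} - h)$ and of degree at most $p+2$ on each of the $4$ transition $h$-neighborhoods of the interior nodes $\xi_1$ and $\xi_2$. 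Thus $\Psi$, sampled at the points of $\TunifintL$, is described by at most $M = 7$ polynomial pieces with maximal degree $P = p+2$.

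Finally, I would invoke Lemma \ref{PiecewisePolyQTTLemma} on the vector $(v_i)_{i=1}^{2^L} = h\,(\Psi(x_i))_{i=1}^{2^L}$; the affine correspondence between the grid coordinates $x_i$ and the integer indices used in the lemma preserves both piece count and degree, while multiplication by the scalar $h$ leaves QTT ranks unchanged. This delivers the bound $P + M \leq (p+2) + 7 = p+9$ on the QTT-ranks of $v$, as claimed. The main non-routine point is verifying the separation hypothesis $|\xi_{i+1}-\xi_i| > 2h$ of Lemma \ref{GPLemma1}: the central gap $\xi_2 - \xi_1 \geq 1/2$ is automatically safe for any reasonable $L$, but the outer intervals have length $\min\{0.25,\lambda p \delta\}$, which may be smaller than $2h$ in the practically interesting regime of very small $\delta$. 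There, I would check that the transition neighborhoods merge with the adjacent boundary pieces rather than proliferating, so that $M \leq 7$ and $P \leq p+2$ are still in force; this piece-counting is the part of the argument that requires the most care.
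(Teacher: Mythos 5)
Your proposal captures the main idea of the paper's proof exactly: recognize $v_i = \Psi(x_i)$ for a mollified $\Psi(y) = \int_{-1}^1 \hat\phi(x)\,\udp(hx+y)\,dx$, apply Lemma~\ref{GPLemma1} to show $\Psi$ is piecewise polynomial with at most $7$ pieces of degree $\le p+2$, and invoke Lemma~\ref{PiecewisePolyQTTLemma} to get the $p+9$ rank bound. You also correctly locate the one delicate point: when $\min\{0.25,\lambda p\delta\} \leq 2h$, the two outer $hp$-subintervals violate the hypothesis $|\xi_{i+1}-\xi_i| > 2h$ of Lemma~\ref{GPLemma1}, so the lemma cannot be applied to them as stated.

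The gap is that your proposed resolution---asserting the transition neighborhoods "merge" with the boundary pieces so that $M\le 7$ and $P\le p+2$ persist---is left at the level of a claim, and it is exactly where the work lies: it requires re-deriving a variant of Lemma~\ref{GPLemma1} near the domain endpoints, separating sub-cases such as $\xi_1 < h$ versus $h \leq \xi_1 \leq 2h$ and using the restriction $y\ge h$ to keep $hx+y\in[0,1]$. The paper avoids this bookkeeping entirely with a different, cleaner device for that case: apply Lemma~\ref{GPLemma1} only to the central subinterval $(\xi_1,\xi_2)$ (whose length is always $\ge 1/2 > 2h$), so $\Psi\in\mathbb{P}_p\bigl((\xi_1+h,\xi_2-h)\bigr)$; extend this degree-$p$ polynomial to $\tilde\Psi$ on all of $[0,1]$, which gives a rank-$(p+1)$ QTT vector agreeing with $v$ at every $x_i\in[\xi_1+h,\xi_2-h]$; since $\xi_1-\xi_0=\xi_3-\xi_2\le 2h$, at most $4$ indices $i$ remain with $x_i\notin[\xi_1+h,\xi_2-h]$, and each such entry is patched by adding a rank-$1$ QTT vector, yielding rank $\le p+5$ in that case. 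So the overall approach is the same, but where your proof defers the piece-counting near the boundary, the paper substitutes an extend-and-patch argument that sidesteps it and in fact gives a slightly sharper bound there.
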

\begin{proof} Fix $i \in \{ 1, \ldots ,2^{L} \}$, let $h \coloneqq
\frac{1}{2^{L}+1}$ and let $x_{i} =  ih$. Define $\hat{\phi} \colon (-1,1) \to
(0,1)$ by
\begin{equation*}
  \hat{\phi}(x) \coloneqq \phi_{i}(x_{i} + x h).
\end{equation*}
Then $\hat{\phi} \in \mathbb{P}_{1}((-1,0))$ and $\hat{\phi} \in
\mathbb{P}_{1}((0,1))$. Then, by a change of variables,
\begin{equation*}
  v_{i} =
h \int_{-1}^{1} \hat{\phi}(x)
u_{\delta,p}(x h + x_{i}) dx.
\end{equation*}
Moreover, let
\begin{equation*}
  \Psi(y) \coloneqq  h
\int_{-1}^{1} \hat{\phi}(x) \udp(x h+y)dx.
\end{equation*}
Observe that $\Psi(x_{i}) = v_{i}$. Let
\begin{equation*}
  \xi_{0} = -1,\quad \xi_{1} =  \min(0.25,\lambda p \delta),\quad
\xi_{2} = 1 - \min(0.25,\lambda p \delta),\quad \xi_{3} = 1.
\end{equation*}
denote the $hp$-grid as defined in equation \eqref{hpGrid}. We
consider the following two cases:
\begin{enumerate}
\item $\xi_{1}-\xi_{0} = \xi_{3}-\xi_{2} \leq 2h$
\item $\xi_{1}-\xi_{0} = \xi_{3}-\xi_{2} > 2h$. 
\end{enumerate}
\noindent\textbf{Case 1: $\xi_{1}-\xi_{0}=\xi_{3}-\xi_{2} \leq 2h$.}
As $\udp \in
\mathbb{P}_{p}((\xi_{1},\xi_{2}))$, Lemma \ref{GPLemma1} implies that $\Psi \in
\mathbb{P}_{p}((\xi_{1}+h,\xi_{2}-h))$. Extend $\Psi$ to $\tilde{\Psi} \in
\mathbb{P}_{p}((\xi_{0},\xi_{3}))$; that is, such that $\tilde{\Psi}(x) =
\Psi(x)$ for every $x \in (\xi_{1}+h,\xi_{2}-h)$. Let now $\tilde{v} \in
\mathbb{R}^{2^{L}}$ be defined by
\begin{equation*}
  \tilde{v}_{i} \coloneqq \tilde{\Psi}(x_{i}),\ i=1, \ldots
,2^{L}.
\end{equation*}
As $\tilde{\Psi} \in \mathbb{P}_{p}((\xi_{0},\xi_{3}))$, Example
\ref{ex2} implies that $\tilde{v}$ has QTT-ranks bounded by $p+1$. Moreover, for
$i \in \{ 1, \ldots ,2^{L} \}$ such that $x_{i} \in [\xi_{1}+h,\xi_{2}-h]$ we
have that
\begin{equation*}
  \tilde{v}_{i} = \tilde{\Psi}(x_{i}) = \Psi(x_{i}) = v_{i}.
\end{equation*}
The entries of $\tilde{v}$ with $\tilde{v}_{i} \neq v_{i}$ can
be modified by addition or subtraction with rank-1 QTT-vectors in order to be
equal to the corresponding element of $v$. As $\xi_{1}-\xi_{0}=\xi_{3}-\xi_{2}
\leq 2h$, the number of $x_{i}$ with $x_{i} \not\in [\xi_{1}+h,\xi_{2}-h]$ is at
most 4. Thus, we have constructed $v \in \mathbb{R}^{2^{L}}$ as a QTT-vector
with QTT-ranks bounded by $p+5$.
\smallskip\\\noindent
\textbf{Case 2: $\xi_{1}-\xi_{0}=\xi_{3}-\xi_{2} > 2h$.} In this case, we can apply Lemma
\ref{GPLemma1} to each of the subintervals $(\xi_{0},\xi_{1}),(\xi_{1},\xi_{2})$
and $(\xi_{2},\xi_{3})$ (as $\xi_{2}-\xi_{1}>2h$, and
$\xi_{1}-\xi_{0}=\xi_{3}-\xi_{2} > 2h$) to obtain a piecewise polynomial of
degree $p+2$. The fact that $\xi_{2}-\xi_{1} > 2h$ follows from
\begin{equation*}
  \xi_{2} - \xi_{1} = 2 - 2 \min(0.25,\lambda p \delta) \geq \frac{3}{2} > 2h.
\end{equation*}
 By
applying Lemma \ref{GPLemma1} multiple times, we obtain that $\Psi$ is a
polynomial of degree $p$ in the subintervals
\begin{equation*}
  (\xi_{0}+h,\xi_{1}-h),\ (\xi_{1}+h,\xi_{2}-h),\
(\xi_{2}+h,\xi_{3}-h)
\end{equation*}
and a polynomial of degree $p+2$ in the subintervals
\begin{equation*}
  (\xi_{1}-h,\xi_{1}),\ (\xi_{1},\xi_{1}+h),\
(\xi_{2}-h,\xi_{2}),\ (\xi_{2},\xi_{2}+h).
\end{equation*}
Thus, by Lemma \ref{PiecewisePolyQTTLemma}, the vector in
$\mathbb{R}^{2^{L}}$ with elements $\Psi(x_{i})$ has QTT-ranks bounded by
$p+9$. As $v_{i} = \Psi(x_{i})$ this concludes the proof.
\end{proof}
We are now in a position to prove the bound on the QTT-ranks of
$\PiLtwo \udL$.
\begin{proposition}\label{PropQTTranks}
  There exists a constant $C>0$ such that,
for all $0<\delta<1$, for all $L\in \mathbb{N}$, and for all $p\in \mathbb{N}$,
denoting $\ud$ the solution to \eqref{DiffEq}, under the hypotheses
\eqref{eq:hypfc} and with $\alpha_0 = \alpha_1 = 0$, and denoting $\udp$ the
  approximation of $\ud$ as given in Proposition \ref{hpFEMTheorem},
then $\PiLtwo \udp$ admits a QTT decomposition with respect to $\TunifintL$ with QTT-ranks bounded by $Cp$.
\end{proposition}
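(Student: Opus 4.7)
The plan is to reduce the QTT rank bound on $\PiLtwo \udp$ to a rank bound on the right-hand side of the associated mass matrix equation (controlled by Lemma \ref{GPLemma2}) combined with a rank-preservation statement for the inverse mass matrix $M^{-1}$.

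First, I would expand $\PiLtwo \udp = \sum_{i=1}^{2^L} w_i \phi_i$ in the Lagrange basis of $V_0^L$ associated with $\TunifintL$. Since this basis is nodal, the QTT rank of $\PiLtwo \udp$ with respect to $\TunifintL$ coincides with the QTT rank of the coefficient vector $w$. Testing the defining identity \eqref{eq:PiLtwo} against each $\phi_j$, the vector $w$ solves the linear system $Mw = v$, with mass matrix $(M)_{ij} = \int_I \phi_i \phi_j$ and right-hand side $v_i = \int_I \udp \phi_i$. By Lemma \ref{GPLemma2}, $v$ admits a QTT decomposition of rank at most $p + 9$.

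Next, I would analyze the action of $M^{-1}$. Under homogeneous Dirichlet conditions on the uniform grid, $M = \tfrac{h}{6}(4I + S + S^\top)$ is exactly tridiagonal, where $S$ is the unit shift. Since $I$ and $S$ admit QTT matrix representations of constant rank, so does $M$. The remaining task is to show that $M^{-1}$ also admits a QTT matrix representation of rank independent of $L$, or equivalently that the map $v \mapsto M^{-1} v$ enlarges QTT rank only by a bounded multiplicative factor $C_M$; this would yield $\text{QTT-rank}(w) \leq C_M(p + 9) \leq Cp$ after redefining $C$, which is the desired conclusion.

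The main obstacle is this last step. The inverse mass matrix is dense, and its entries decay exponentially at rate $(2-\sqrt{3})^{|i-j|}$, coming from the roots of the tridiagonal symbol $4 + 2\cos\theta$, but such decay alone does not produce a finite-rank tensor train factorization. Viable strategies include: exploiting the explicit sine transform diagonalization of $M$ together with constant-rank QTT representations of trigonometric vectors in the spirit of Example \ref{ex1}; approximating $M^{-1}$ by a banded Toeplitz truncation and absorbing the exponentially small tail into a correction of bounded rank; or invoking known $L$-uniform rank bounds for QTT representations of inverses of discrete 1D finite element operators coming from the BPX-preconditioner literature. In all cases the constant $C_M$ must be shown to be uniform in $L$, $p$, and $\delta$ to obtain the claimed bound $Cp$.
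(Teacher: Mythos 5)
Your reduction is the same as the paper's: write $\PiLtwo\udp$ in the nodal basis, observe that its coefficient vector $w$ solves $M_L w = v$ with $v_i = \int_I \udp\phi_i$, use Lemma \ref{GPLemma2} to bound the QTT ranks of $v$ by $p+9$, and then try to control the rank inflation coming from applying $M_L^{-1}$. Up to that point you track the paper's proof exactly.

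The gap is the last step, and you flag it yourself: you never actually establish that $M_L^{-1}$ admits a QTT matrix representation with rank bounded independently of $L$, you only list candidate strategies. That step cannot be waved through — the exponential entry decay argument you sketch (via the tridiagonal symbol) gives low \emph{numerical} rank but not an exact TT factorization with $L$-independent rank, and the sine-transform route requires nontrivial work to turn the diagonalization into a TT matrix decomposition. The paper closes the gap by a direct citation: \cite[Theorem 3.3]{OseledetsMatAndInv} gives that $M_L^{-1}$ has exact QTT ranks bounded by $5$, uniformly in $L$. Combining this with the standard fact (from \cite[Section 4.3]{OseledetsTTdec}) that the QTT ranks of a matrix-vector product are bounded by the product of the matrix and vector ranks yields $\text{QTT-rank}(w) \le 5(p+9)$, i.e. $Cp$ with an explicit $C$. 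Without that ingredient your argument is incomplete; you have correctly identified where the missing piece lives (your third listed strategy, rank bounds for inverses of 1D FE operators, is the one the paper uses), but a citation or a proof of that rank bound is required, not merely a pointer to plausible-sounding directions.
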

\begin{proof}
  Let $\{\phi_i\}_{i=1}^{2^L}$ be the Lagrange basis of $V_0(I,1,
  \TunifL)$ and let $v \in \mathbb{R}^{2^{L}}$ be the vector such that
\begin{equation*}
  v_{i} = \int_I\udL \phi_{i},\qquad \forall i\in \range{2^L}.
\end{equation*}
Furthermore, let $M_L$ be the mass matrix associated to the basis $\{\phi_i\}_{i=1}^{2^L}$.
By Lemma \ref{GPLemma2}, $v$ has QTT-ranks bounded by $p+9$ and,
by \cite[Theorem 3.3]{OseledetsMatAndInv}, $M_L^{-1}$ has QTT-ranks
bounded by $5$. Then, by \cite[Section 4.3]{OseledetsTTdec}, their product
$M_L^{-1}v$ has QTT-ranks bounded by the product of the QTT-ranks; that
is, bounded by $5(p+9)$. The $\mathbb{P}_{1}$-FE coefficient vector
of $\PiLtwo \udp$ is given by $ M_L^{-1} v$, and hence has QTT-ranks bounded by $5(p+9)$.
\end{proof}
\subsection{\textit{A priori} error analysis}
\label{sec:error-estimate}
We now turn our focus to estimating the error $\| \PiLtwo
\udp-\ud\|_{\delta}$. Our goal (i.e., the result of Theorem
\ref{QTTapprox} below) is to prove that there exists $C>0$ such that for all $L
\in \mathbb{N}$ and all $0<\delta<1$,
there holds
\begin{equation*}
  || \PiLtwo\udL - \ud ||_{\delta} \leq C
\min\left(\sqrt{h},\frac{h}{\sqrt{\delta}}\right),
\end{equation*}
where $h = 1/(2^L+1)$.
First, by the triangle inequality and stability \eqref{eq:PiLtwostability},
\begin{equation}\label{fullEstimate}
  \begin{aligned}
  || u_{\delta} - \PiLtwo \udL ||_{\delta}
  & \leq
  || u_{\delta} - \Pi_L u_{\delta} ||_{\delta}
  + ||\PiLtwo \left(\Pi_L\ud-\ud \right) ||_{\delta}
  + ||\PiLtwo \left(u_{\delta}-\udL \right) ||_{\delta} 
  \\ & \leq (1+\Cstab) || u_{\delta} - \Pi_L u_{\delta} ||_{\delta} + \Cstab\|\ud-\udL\|_{\delta}.
  \end{aligned}
\end{equation}
The second term at the right hand side of the equation above can be estimated
according to Proposition \ref{hpFEMTheorem}.
It remains to
bound the first term on the right hand side of equation \eqref{fullEstimate}.
\subsubsection{Error analysis for $\mathbb{P}_1$ finite elements}
For ease of presentation, we
assume that $\alpha_0 = \alpha_1 = 0$ in the following results and then remove this
assumption in Theorem \ref{QTTapprox}, our main theorem.

We recall $\delta$-dependent upper bounds for the Sobolev norms of the solution
to \eqref{DiffEq}, under different regularity assumptions on the right hand side.
The first result
considers the weak assumption of $f \in L^{2}(I)$.
\begin{proposition}[{\cite[Lemma 2.1]{SingPerturbProb}}]\label{solProp}
  Let $0< \delta < 1$, $u_{\delta} \in H^{2}(I)$ be the solution of equation
\eqref{DiffEq} with $\alpha_0 = \alpha_1 = 0$, $f \in L^{2}(I)$ and $c\in
L^{\infty}(I)$, $c(x)\geq
c_{\min{}} > 0$ for all $x\in \overline{I}$. There exists
$C>0$, independent of $\delta$ and $f$, such that 
\begin{equation*}
 \delta^2\|\ud \|_{H^2(I)}+ \delta \| \ud\|_{H^1(I)} +\| \ud \|_{L^2(I)}  \leq C\|f\|_{L^2(I)}.
\end{equation*}
\end{proposition}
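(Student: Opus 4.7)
The plan is to derive all three bounds from a standard energy estimate obtained by testing the weak form against $u_\delta$ itself, and then to bootstrap the $H^2$ bound directly from the PDE. The crucial feature that makes everything uniform in $\delta$ is the reaction coefficient: the hypothesis $c(x) \geq c_{\min} > 0$ provides $L^2$ coercivity independently of $\delta$, which is what distinguishes the reaction--diffusion case from pure diffusion.

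First, since $\alpha_0 = \alpha_1 = 0$ we have $u_\delta \in H^1_0(I)$, so it is an admissible test function in \eqref{VarEq}. Testing with $v = u_\delta$ gives
\begin{equation*}
\delta^2 \|u_\delta'\|_{L^2(I)}^2 + \int_I c\, u_\delta^2 \;=\; \int_I f\, u_\delta.
\end{equation*}
The left-hand side is bounded below by $\delta^2 \|u_\delta'\|_{L^2}^2 + c_{\min} \|u_\delta\|_{L^2}^2$, while the right-hand side is bounded above, via Cauchy--Schwarz and Young's inequality with a parameter $\eta = c_{\min}/2$, by $\tfrac{1}{2c_{\min}}\|f\|_{L^2}^2 + \tfrac{c_{\min}}{2}\|u_\delta\|_{L^2}^2$. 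Absorbing the latter term yields
\begin{equation*}
\delta^2 \|u_\delta'\|_{L^2}^2 + \tfrac{c_{\min}}{2}\|u_\delta\|_{L^2}^2 \;\leq\; \tfrac{1}{2c_{\min}}\|f\|_{L^2}^2,
\end{equation*}
from which $\|u_\delta\|_{L^2(I)} \leq C\|f\|_{L^2(I)}$ and $\delta\|u_\delta'\|_{L^2(I)} \leq C\|f\|_{L^2(I)}$ follow with a constant depending only on $c_{\min}$.

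Next, to handle the $H^2$ seminorm, I rewrite the strong form \eqref{DiffEq} as an identity in $L^2(I)$: $\delta^2 u_\delta'' = c u_\delta - f$. The triangle inequality gives
\begin{equation*}
\delta^2 \|u_\delta''\|_{L^2(I)} \;\leq\; \|c\|_{L^\infty(I)}\|u_\delta\|_{L^2(I)} + \|f\|_{L^2(I)} \;\leq\; C\|f\|_{L^2(I)},
\end{equation*}
where the last step uses the $L^2$ bound from the previous step.

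Finally, I assemble the three estimates. Using $0<\delta<1$, I have $\delta^2\|u_\delta\|_{L^2} \leq \|u_\delta\|_{L^2}$ and $\delta^2\|u_\delta'\|_{L^2} \leq \delta\cdot(\delta\|u_\delta'\|_{L^2})\leq C\|f\|_{L^2}$, so each of the three summands $\delta^2\|u_\delta\|_{H^2}$, $\delta\|u_\delta\|_{H^1}$, $\|u_\delta\|_{L^2}$ is bounded by a constant multiple of $\|f\|_{L^2(I)}$, concluding the proof. There is no real obstacle here; the only subtlety is to keep track of the $\delta$-dependence and verify that all constants depend only on $c_{\min}$ and $\|c\|_{L^\infty(I)}$, not on $\delta$.
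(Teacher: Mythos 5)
Your proof is correct and is the standard argument: the paper does not reproduce a proof here (it cites \cite[Lemma 2.1]{SingPerturbProb}), and your energy estimate with $L^2$-coercivity from $c \geq c_{\min}$, followed by the bootstrap $\delta^2 u_\delta'' = c u_\delta - f$ read as an $L^2$ identity, is precisely how this bound is obtained. All the $\delta$-power bookkeeping (using $0<\delta<1$ to absorb the lower-order terms into the stated powers) is handled correctly, so there is nothing to add.
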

If $f \in H^{1}_{0}$, then the following stronger result holds:
\begin{lemma}[{\cite[Lemma A.2]{SingPerturbProb}}]\label{ELemma}
  Let $0< \delta
< 1$, $u_{\delta} \in H^{2}(I)$ be the solution of equation \eqref{DiffEq} with
$\alpha_0 =\alpha_1=0$, $f \in H^{1}_{0}(I)$ and $c\in W^{1, \infty}(I)$, $c(x)\geq
c_{\min{}} > 0$ for all $x\in \overline{I}$. There exists $C>0$, independent
of $\delta$ and $f$, such that
\begin{equation*}
  \delta ||u_{\delta}||_{H^{2}(I)} + ||u_{\delta}||_{H^{1}(I)}
\leq C ||f||_{H^{1}(I)}.
\end{equation*}
\end{lemma}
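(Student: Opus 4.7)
The plan is to test \eqref{DiffEq} with $-\ud''$ and integrate by parts, exploiting that both $\ud$ and $f$ vanish on $\partial I$. The crucial gain over the standard energy estimate is that, when $f\in H^1_0(I)$, the integral $-\int_I f\,\ud''$ integrates by parts with no boundary contribution, transferring a derivative from $f$ onto $\ud'$. This avoids the loss of a factor of $\delta$ that would otherwise arise.

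Concretely, multiplying \eqref{DiffEq} by $-\ud''$ and integrating over $I$: for the reaction term, integration by parts using $\ud(0)=\ud(1)=0$ yields
\[
-\int_I c\,\ud\,\ud'' = \int_I c\,(\ud')^2 + \int_I c'\,\ud\,\ud',
\]
which only requires $c\in W^{1,\infty}(I)$. On the right, since $f(0)=f(1)=0$,
\[
-\int_I f\,\ud'' = \int_I f'\,\ud'.
\]
Combining these identities one arrives at
\[
\delta^2\|\ud''\|_{L^2(I)}^2 + \int_I c\,(\ud')^2 = -\int_I c'\,\ud\,\ud' + \int_I f'\,\ud'.
\]

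Next, using $c\geq c_{\min}>0$ together with Cauchy-Schwarz and Young's inequality to absorb the $\|\ud'\|_{L^2(I)}^2$ factors appearing on the right into the coercive contribution on the left, one obtains
\[
\delta^2\|\ud''\|_{L^2(I)}^2 + \tfrac{c_{\min}}{2}\|\ud'\|_{L^2(I)}^2 \leq C\bigl(\|\ud\|_{L^2(I)}^2 + \|f\|_{H^1(I)}^2\bigr),
\]
with $C$ depending only on $\|c\|_{W^{1,\infty}(I)}$ and $c_{\min}$. Proposition \ref{solProp} supplies $\|\ud\|_{L^2(I)}\leq C\|f\|_{L^2(I)}\leq C\|f\|_{H^1(I)}$, so the residual lower-order term is harmless; together with this $L^2$ estimate, the bound above yields the claim.

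The one real subtlety is the choice of test function: multiplying by $\ud$ would yield only the basic energy estimate with a $\delta$-factor on the $H^1$-seminorm and no $H^2$ control, whereas the $\delta$-free $H^1$-bound together with the correctly scaled $\delta\|\ud\|_{H^2(I)}$ term requires testing with $-\ud''$, which is legitimate precisely because $f\in H^1_0(I)$ permits the boundary-free integration by parts on the right-hand side. The integration by parts itself is justified by the hypothesis $\ud\in H^2(I)$, so that $\ud'$ has well-defined traces, and no regularity on $c$ beyond $W^{1,\infty}$ is needed since no second derivative of $c$ appears.
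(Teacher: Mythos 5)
Your proof is correct. The paper does not prove this lemma itself but cites \cite[Lemma A.2]{SingPerturbProb} for it; the argument you give (test with $-\ud''$, integrate by parts on both the reaction term and the right-hand side using the homogeneous Dirichlet data on $\ud$ and $f$, absorb the cross terms by Young's inequality, and feed in the $L^2$ bound from Proposition \ref{solProp}) is the standard proof of this type of $\delta$-robust regularity estimate and is the same one used in the cited reference. All details check out: the boundary terms vanish exactly because $\ud(0)=\ud(1)=0$ and $f\in H^1_0(I)$, the hypotheses $c\in W^{1,\infty}$ and $c\geq c_{\min}>0$ are used precisely where you say, the hypotheses of Proposition \ref{solProp} are satisfied since $H^1_0(I)\subset L^2(I)$ and $W^{1,\infty}(I)\subset L^\infty(I)$, and the final assembly $\delta\|\ud\|_{H^2}+\|\ud\|_{H^1}\leq C\|f\|_{H^1}$ follows from the three componentwise bounds together with $0<\delta<1$.
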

The following result on the $\mathbb{P}_{1}$-Galerkin
projection will also be needed.
\begin{proposition}[{\cite[Lemma 4.1]{SingPerturbProb}}]\label{ProjectionProp}
There exists a constant $C > 0$, such that for all $L\in \mathbb{N}$ and $v \in
H^{2}(I)$, writing $h = 1/(2^L+1)$,
\begin{equation*}
  || (\Pi_{L} v)' - v' ||_{L^{2}(I)} \leq C
  \begin{cases} 
    ||v||_{H^{1}(I)} \\
    h ||v||_{H^{2}(I)} ,
  \end{cases}
\end{equation*}
and
\begin{equation*}
  || \Pi_{L} v - v ||_{L^{2}(I)} \leq C
  \begin{cases} h
    ||v||_{H^{1}(I)} \\
     h^{2} ||v||_{H^{2}(I)} .
  \end{cases}
\end{equation*}
\end{proposition}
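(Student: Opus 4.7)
The plan is to combine quasi-optimality of the $a_\delta$-Galerkin projection $\Pi_L$ in its energy norm (Céa's lemma) with classical piecewise linear interpolation error estimates on the uniform grid $\TunifL$, and then to upgrade to the $L^2$ bounds by an Aubin--Nitsche duality argument fueled by the sharp regularity estimate of Lemma~\ref{ELemma}.

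First I would assemble the approximation ingredients. Let $I_L v \in V^L$ denote the piecewise linear Lagrange interpolant of $v$ at the nodes of $\TunifL$. On the uniform grid of mesh size $h = 1/(2^L+1)$, standard Bramble--Hilbert arguments deliver
\[
\|v - I_L v\|_{L^2(I)} \leq C\min\bigl(h\|v\|_{H^1(I)},\, h^2\|v\|_{H^2(I)}\bigr),
\quad
\|(v - I_L v)'\|_{L^2(I)} \leq C\min\bigl(\|v\|_{H^1(I)},\, h\|v\|_{H^2(I)}\bigr).
\]
Since $\Pi_L$ is the $a_\delta$-orthogonal projection onto $V^L$ and $a_\delta$ is symmetric and coercive, Céa's lemma yields $\|v - \Pi_L v\|_a \leq \|v - I_L v\|_a$ in the energy norm $\|w\|_a^2 = a_\delta(w,w) = \delta^2\|w'\|_{L^2}^2 + \int_I c\,w^2$. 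Because $\delta \|(v - \Pi_L v)'\|_{L^2} \leq \|v - \Pi_L v\|_a$, combining these two ingredients already provides the $H^1$-seminorm bounds in each of the two regularity regimes.

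For the $L^2$ bounds I would run an Aubin--Nitsche argument. Set $e_L := v - \Pi_L v$, which lies in $H^1_0(I)$ after subtracting a lifting of any nonzero Dirichlet data of $v$. Let $\phi \in H^1_0(I)$ solve the adjoint problem $a_\delta(\zeta,\phi) = (e_L,\zeta)_{L^2(I)}$ for all $\zeta \in H^1_0(I)$. By Galerkin orthogonality and symmetry of $a_\delta$,
\[
\|e_L\|_{L^2(I)}^2 = a_\delta(e_L, \phi) = a_\delta(e_L, \phi - I_L\phi),
\]
and the right-hand side is controlled by the Cauchy--Schwarz inequality in $a_\delta$, the already established energy bound for $e_L$, the interpolation estimate applied to $\phi - I_L\phi$, and the regularity bound $\delta\|\phi\|_{H^2} + \|\phi\|_{H^1} \leq C\|e_L\|_{H^1}$ supplied by Lemma~\ref{ELemma} (or by Proposition~\ref{solProp} with one fewer derivative). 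Counting factors of $h$ in the interpolation bounds for both $e_L$ and $\phi - I_L\phi$ yields the claimed $h$ and $h^2$ powers on $\|v\|_{H^1}$ and $\|v\|_{H^2}$ respectively.

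The main obstacle is to control the constants uniformly in $\delta$. A naïve combination of Céa and interpolation produces a parasitic $h^2/\delta$ term in the derivative bound that degrades in the singularly perturbed regime $h \gg \delta$, and the same defect propagates into the Aubin--Nitsche calculation because the adjoint regularity $\|\phi\|_{H^2} \leq C\|e_L\|_{H^1}/\delta$ itself carries a factor $1/\delta$. Removing this defect relies on the one-dimensional structure: the discrete defect $\Pi_L v - I_L v \in V^L_0$ satisfies a discrete reaction-dominated equation whose inverse can be bounded uniformly in $\delta$, and this superconvergence-type cancellation is the technical heart of the proof given in \cite{SingPerturbProb}.
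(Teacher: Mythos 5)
The paper does not prove Proposition~\ref{ProjectionProp}: it is imported verbatim from \cite[Lemma~4.1]{SingPerturbProb}, so there is no in-paper proof to compare against and your proposal must be judged on its own. Judged that way, it is incomplete, and the honest last paragraph admits the decisive step is missing. Both pillars you put forward lose $\delta$-uniformity precisely where it matters. C\'ea gives $\delta\|(v-\Pi_L v)'\|_{L^2}\le \|v-I_Lv\|_a\le C(\delta+h)\|v\|_{H^1}$, hence only $\|(v-\Pi_Lv)'\|_{L^2}\le C(1+h/\delta)\|v\|_{H^1}$; your early claim that ``combining these two ingredients already provides the $H^1$-seminorm bounds'' is therefore false for $h\gg\delta$, as you yourself note later. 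Aubin--Nitsche has the same defect: with $\phi$ the adjoint state, the reaction part of $a_\delta(e_L,\phi-I_L\phi)$ is of size $h\|\phi\|_{H^1}\|e_L\|_{L^2}$, and Proposition~\ref{solProp} only gives $\|\phi\|_{H^1}\le C\delta^{-1}\|e_L\|_{L^2}$, leaving an $h\delta^{-1}\|e_L\|_{L^2}^2$ term that you cannot absorb unless $h\lesssim\delta$.

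The idea you gesture at (``discrete reaction-dominated equation'' for $\theta_L:=\Pi_Lv-I_Lv$) is the right one, but the mechanism that makes it $\delta$-robust is never stated, and it is the whole proof. In one dimension $(I_Lv)'$ is the cellwise mean of $v'$, so $\int_I (v-I_Lv)'\,w_L'=0$ for every $w_L\in V^L_0$ (the function $w_L'$ is piecewise constant). Testing Galerkin orthogonality with $\theta_L$, the stiffness coupling $\delta^2\int(v-I_Lv)'\theta_L'$ vanishes exactly, giving $\|\theta_L\|_a^2=\int c\,(v-I_Lv)\,\theta_L\le C\|v-I_Lv\|_{L^2}\|\theta_L\|_a$, hence $\|\theta_L\|_{L^2}\le c_{\min}^{-1/2}\|\theta_L\|_a\le C\|v-I_Lv\|_{L^2}$ with $C$ independent of $\delta$. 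The triangle inequality then yields both $L^2$ estimates, and the derivative estimates follow not from the $\delta$-weighted energy bound on $\theta_L'$ but from the inverse inequality $\|\theta_L'\|_{L^2}\le Ch^{-1}\|\theta_L\|_{L^2}\le Ch^{-1}\|v-I_Lv\|_{L^2}$ together with the interpolation bounds for $(v-I_Lv)'$. Aubin--Nitsche is not needed at all. Without the cancellation of the stiffness term and the inverse estimate, your sketch does not close; with them, the argument is short and elementary.
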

We now introduce interpolation spaces between $L^{2}(I)$ and
$H^{1}_{0}(I)$.
\begin{definition}\label{interpolantSpace}
  For $0 < \theta < 1$, we define the
interpolation space $H^{\theta,\infty}(I)$ between $L^{2}(I)$ and $H^{1}_{0}(I)$
as
\begin{equation*}
  H^{\theta,\infty}(I) \coloneqq \{ v \in H^{1}(I) \ : \
||v||_{\theta,\infty} < \infty \},
\end{equation*}
where
\begin{equation*}
  ||v||_{\theta,\infty} \coloneqq \sup_{t > 0}
\frac{K(t,f)}{t^{\theta}},\quad \text{and}\quad K(t,f) \coloneqq
\inf_{\substack{v = v_{0} + v_{1} \\ v_{0} \in L^{2}(I),\ v_{1} \in
H^{1}_{0}(I)}} ||v_{0}||_{L^{2}(I)} + t ||v_{1}||_{H^{1}(I)}.
\end{equation*}
\end{definition}
See, e.g., \cite{Brenner2008,InterpolationSpaces} for more
details on interpolation spaces. The following result implies that the right
hand side $f$ of equation \eqref{DiffEq} is contained in $H^{1/2,\infty}(I)$.
\begin{lemma}[{\cite[Chapter 2, Section 5, Lemma
    5.2]{Lions}}]\label{interpolantIncl}
  $H^{1}(I) \subset H^{1/2,\infty}(I)$ with
continuous inclusion.
\end{lemma}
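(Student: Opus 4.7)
The statement to prove is the continuous inclusion $H^1(I) \subset H^{1/2,\infty}(I)$, i.e., the existence of a constant $C>0$ with $\|v\|_{1/2,\infty} \leq C \|v\|_{H^1(I)}$ for all $v \in H^1(I)$. Unrolling definitions, this amounts to constructing, for every $t > 0$ and every $v \in H^1(I)$, a splitting $v = v_0 + v_1$ with $v_0 \in L^2(I)$, $v_1 \in H^1_0(I)$, such that $\|v_0\|_{L^2(I)} + t\|v_1\|_{H^1(I)} \leq C t^{1/2} \|v\|_{H^1(I)}$. The essential obstruction is that $v \in H^1(I)$ need not vanish at the endpoints of $I=(0,1)$, so one must subtract off a boundary contribution to land in $H^1_0(I)$, and this is what governs the $t^{1/2}$ rate.

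The plan is a standard cutoff argument exploiting the 1D Sobolev embedding $H^1(I) \hookrightarrow L^\infty(I)$. For $0 < t \leq 1/4$, pick a smooth cutoff $\chi_t \colon [0,1] \to [0,1]$ with $\chi_t \equiv 0$ on $[0,t] \cup [1-t,1]$, $\chi_t \equiv 1$ on $[2t, 1-2t]$, and $\|\chi_t'\|_{L^\infty(I)} \leq C/t$. Set $v_1 := \chi_t v$ and $v_0 := (1-\chi_t) v$. Since $v \in H^1(I) \hookrightarrow C(\bar I)$ in one dimension and $\chi_t$ vanishes at the endpoints, $v_1 \in H^1_0(I)$. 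The support of $v_0$ lies in $[0,2t]\cup[1-2t,1]$, which has measure $\leq 4t$, so
\begin{equation*}
\|v_0\|_{L^2(I)} \leq \|1-\chi_t\|_{L^\infty} \cdot (4t)^{1/2} \|v\|_{L^\infty(I)} \leq C\, t^{1/2} \|v\|_{H^1(I)},
\end{equation*}
using the 1D Sobolev embedding in the last step.

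For $v_1$, Leibniz gives $v_1' = \chi_t' v + \chi_t v'$; the second term is bounded in $L^2$ by $\|v'\|_{L^2}$, while for the first,
\begin{equation*}
\|\chi_t' v\|_{L^2(I)} \leq \frac{C}{t}\,(4t)^{1/2}\|v\|_{L^\infty(I)} \leq \frac{C}{t^{1/2}} \|v\|_{H^1(I)}.
\end{equation*}
Combined with $\|v_1\|_{L^2} \leq \|v\|_{L^2}$, this yields $\|v_1\|_{H^1(I)} \leq C t^{-1/2} \|v\|_{H^1(I)}$ (absorbing lower order terms since $t \leq 1/4$). Summing the two contributions gives
\begin{equation*}
K(t,v) \leq \|v_0\|_{L^2} + t\|v_1\|_{H^1} \leq C t^{1/2} \|v\|_{H^1(I)},
\end{equation*}
so $K(t,v)/t^{1/2} \leq C\|v\|_{H^1(I)}$ for $0 < t \leq 1/4$. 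For $t > 1/4$, the trivial splitting $v_0 = v$, $v_1 = 0$ gives $K(t,v)/t^{1/2} \leq 2\|v\|_{L^2(I)}$. Taking the supremum over $t>0$ yields $\|v\|_{1/2,\infty} \leq C \|v\|_{H^1(I)}$.

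The only genuinely delicate point is matching the scaling: the cutoff width $t$ is chosen so that $\|v_0\|_{L^2} \sim t^{1/2}$ (measure $\sim t$) while the derivative blow-up $\|\chi_t'\|_\infty \sim 1/t$ contributes $t\cdot t^{-1/2} = t^{1/2}$ to $t\|v_1\|_{H^1}$. Everything else is just Leibniz and the 1D Sobolev inequality; no density argument is needed since $v \in H^1(I)$ is already continuous on $\bar I$.
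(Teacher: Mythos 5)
The paper does not prove this lemma; it cites it directly to Lions. So there is no in-paper argument to compare against. That said, your proof is correct and is the standard one for this kind of $K$-functional estimate: the obstruction to landing in $H^1_0(I)$ is entirely at the endpoints, and a cutoff at scale comparable to $t$ gives $\|v_0\|_{L^2}\lesssim t^{1/2}\|v\|_{L^\infty}$ from the measure of the boundary strip, while the derivative blow-up $\|\chi_t'\|_\infty\sim 1/t$ over a set of measure $\sim t$ gives $\|v_1\|_{H^1}\lesssim t^{-1/2}\|v\|_{H^1}$, and the $1$D embedding $H^1(I)\hookrightarrow L^\infty(I)$ closes the estimate. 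The splitting into $t\le 1/4$ and $t>1/4$ (trivial splitting for large $t$) is exactly the right way to make the supremum finite. This is a self-contained, correct argument that replaces the external citation.

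One minor remark: the $(4t)^{1/2}$ factor you use when estimating $\|\chi_t' v\|_{L^2}$ could be sharpened to $(2t)^{1/2}$, since $\operatorname{supp}\chi_t'\subset[t,2t]\cup[1-2t,1-t]$ has measure $2t$, but this of course does not affect the conclusion.
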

The following proposition establishes the desired error estimates on
the $\mathbb{P}_{1}$-FE solution of problem \eqref{DiffEq} with homogeneous
Dirichlet boundary conditions.
\begin{proposition}\label{errorEstimates}
  Let $0< \delta < 1$ and let
$u_{\delta} \in H^{2}(I)$ be the solution of equation \eqref{DiffEq} with
$\alpha_0=\alpha_1 = 0$ and $f$ and $c$ subject to \eqref{eq:hypfc}, Then there exists $C>0$, independent of $\delta$ such that,
for all $L\in \mathbb{N}$, writing $h = 1/(2^L+1)$,
\begin{enumerate}
    \item If $f \in L^{2}(I)$, then
      \begin{equation*}
        \begin{aligned}
          ||\Pi_L u_{\delta} - u_{\delta} ||_{L^{2}(I)}
          & \leq C \begin{cases}
            ||f||_{L^{2}(I)}, & \text{for } h \geq \delta \\
            \dfrac{h^{2}}{\delta^{2}} ||f||_{L^{2}(I)}, & \text{for } h \leq \delta
              \end{cases}
              \\
              \delta |\Pi_L u_{\delta} - u_{\delta} |_{H^{1}(I)}
              & \leq C \begin{cases} 
                ||f||_{L^{2}(I)}, & \text{for } h \geq \delta \\
                \dfrac{ h}{\delta}
||f||_{L^{2}(I)}, & \text{for } h \leq \delta \end{cases}
    \end{aligned}
    \end{equation*}
    \item If $f \in H^{1/2,\infty}(I)$, then
      \begin{equation*}
        \begin{aligned}
          || \Pi_L u_{\delta} - u_{\delta} ||_{L^{2}} & \leq C
          \begin{cases}  \sqrt{h} ||f||_{H^{1/2,\infty}(I)}, & \text{for } h \geq \delta \\
            \dfrac{ h^{2}}{\delta^{3/2}} ||f||_{H^{1/2,\infty}(I)}, & \text{for } h \leq \delta
          \end{cases}
          \\
          \delta |\Pi_L u_{\delta} - u_{\delta} |_{H^{1}(I)} & \leq C
          \begin{cases} \sqrt{h} ||f||_{H^{1/2,\infty}(I)}, & \text{for } h \geq \delta \\
            \dfrac{h}{\delta^{1/2}} ||f||_{H^{1/2,\infty}(I)}, & \text{for } h \leq \delta
          \end{cases}
    \end{aligned}
    \end{equation*}
    \item If $f \in H^{1}_{0}(I)$, then
      \begin{equation*}
        \begin{aligned}
          || \Pi_L u_{\delta} - u_{\delta} ||_{L^{2}(I)} & \leq C
          \begin{cases} h ||f||_{H^{1}(I)}, & \text{for } h \geq \delta \\
            \dfrac{h^{2}}{\delta} ||f||_{H^{1}(I)}, & \text{for } h \leq \delta
          \end{cases}
          \\
          \delta |\Pi_L u_{\delta} - u_{\delta} |_{H^{1}(I)} & \leq C h ||f||_{H^{1}(I)}
    \end{aligned}
    \end{equation*}
  \end{enumerate}
  In particular, if $f \in H^{1/2,\infty}(I)$ then
  \begin{equation*}
    || \Pi_L u_{\delta} - u_{\delta} ||_{\delta} \leq C \min
\left( \sqrt{h}, \frac{h}{\sqrt{\delta}} \right).
\end{equation*}
\end{proposition}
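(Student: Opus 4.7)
The plan is to treat the three regularity regimes for $f$ separately, proving cases (1) and (3) by combining the \emph{a priori} bounds in Proposition \ref{solProp} and Lemma \ref{ELemma} with the approximation estimates in Proposition \ref{ProjectionProp}, and then obtaining case (2) by a $K$-functional interpolation argument between (1) and (3). Finally, the concluding estimate in the $\|\cdot\|_\delta$-norm is obtained by combining the $L^2$ and $H^1$ bounds in case (2).

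For case (1), I would distinguish the regimes $h\geq \delta$ and $h\leq \delta$. When $h\geq \delta$, the Galerkin orthogonality for the symmetric form $a_\delta$ gives $\|\Pi_L \ud\|_\delta \leq \|\ud\|_\delta$, so by the triangle inequality $\|\Pi_L \ud-\ud\|_\delta \leq 2\|\ud\|_\delta$, and Proposition \ref{solProp} bounds $\|\ud\|_\delta$ by $C\|f\|_{L^2}$; splitting $\|\cdot\|_\delta$ into its $L^2$ and $H^1$ components yields the two claimed bounds. When $h\leq \delta$, I would use Proposition \ref{ProjectionProp} in its $H^2$ form together with the bound $\|\ud\|_{H^2}\leq C\|f\|_{L^2}/\delta^2$ from Proposition \ref{solProp}. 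Case (3) is analogous: for the $L^2$ part I again split into the regimes $h\geq \delta$ and $h\leq \delta$ and apply Proposition \ref{ProjectionProp} with either the $H^1$ or $H^2$ bound, using Lemma \ref{ELemma} to estimate $\|\ud\|_{H^1}$ and $\|\ud\|_{H^2}$ by $\|f\|_{H^1}$; for the $H^1$ part I obtain a uniform $h$-bound by combining $\delta\leq h$ (in the regime $h\geq\delta$) with $|\Pi_L \ud-\ud|_{H^1}\leq C\|\ud\|_{H^1}$, and in the regime $h\leq\delta$ by the $H^2$ bound from Proposition \ref{ProjectionProp}.

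For case (2), the core idea is to exploit the linearity of both the PDE and the projection. Given any decomposition $f=f_0+f_1$ with $f_0\in L^2(I)$ and $f_1\in H^1_0(I)$, let $u^0_\delta$ and $u^1_\delta$ denote the solutions with right-hand sides $f_0$ and $f_1$ respectively, so that $\ud = u^0_\delta + u^1_\delta$ and
\begin{equation*}
\Pi_L\ud - \ud = (\Pi_L u^0_\delta - u^0_\delta) + (\Pi_L u^1_\delta - u^1_\delta).
\end{equation*}
Applying cases (1) and (3) to each summand and taking the infimum over all such decompositions produces, in each of the four sub-regimes, the $K$-functional $K(t,f)$ with an appropriate choice of $t$: specifically $t=h$ in the regime $h\geq \delta$ and $t=\delta$ in the regime $h\leq \delta$. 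Using Definition \ref{interpolantSpace} with $\theta=1/2$, $K(t,f)\leq t^{1/2}\|f\|_{1/2,\infty}$, which produces the factors $\sqrt{h}$ and $\sqrt{\delta}$ in the correct positions to match the stated bounds.

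The final ``in particular'' statement follows by squaring and summing the $L^2$ and $H^1$ estimates in case (2): in the regime $h\geq\delta$ both contribute $\sqrt{h}$, and in the regime $h\leq\delta$ the $L^2$ contribution $h^2/\delta^{3/2}$ is dominated by the $H^1$ contribution $h/\sqrt{\delta}$ (since $h\leq\delta$). The main obstacle is the interpolation step in case (2): one must carefully verify that the bounds from cases (1) and (3) combine in the form $\|f_0\|_{L^2}+t\|f_1\|_{H^1}$ with the same $t$ appearing throughout, so that the infimum indeed realises $K(t,f)$; this forces the separate treatment of the $L^2$ and $H^1$ components of the error and the distinct choices of $t$ in the two regimes.
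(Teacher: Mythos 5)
Your proposal is correct and follows essentially the same strategy as the paper: combine the $\delta$-dependent regularity bounds from Proposition~\ref{solProp} and Lemma~\ref{ELemma} with the projection estimates of Proposition~\ref{ProjectionProp}, then obtain Assertion~2 by $K$-functional interpolation between the two endpoint cases with $t=h$ when $h\geq\delta$ and $t=\delta$ when $h\leq\delta$. The only difference is that the paper cites the $L^2$-norm estimates directly from \cite[Theorem A.1]{SingPerturbProb} and only carries out the argument for the $H^1$-seminorm parts, whereas you derive both (your use of the energy-norm quasi-optimality of $\Pi_L$ for case (1) with $h\geq\delta$ is a valid alternative to invoking Proposition~\ref{ProjectionProp} there).
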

\begin{proof}  The $L^{2}$-norm estimates are
proved in \cite[Theorem A.1]{SingPerturbProb} and we adapt their strategy to
prove the corresponding $H^{1}$-seminorm estimates. We begin with proving
Assertion 1 and Assertion 3 and then use an interpolation technique to establish
Assertion 2.

Assertion 1: Suppose that $f \in L^{2}(I)$. The $H^{1}$-seminorm estimates
follow from Proposition \ref{solProp} and Proposition \ref{ProjectionProp}
\begin{equation*}
  \delta |\Pi_L \ud - \ud |_{H^{1}(I)} \leq C
  \begin{cases}
    \delta ||\ud||_{H^{1}(I)}, & \text{if }h \geq \delta \\
    \delta h ||\ud||_{H^{2}(I)}, & \text{if }h \leq \delta
  \end{cases}
  \leq C
  \begin{cases} ||f||_{L^{2}(I)}, & \text{if }h \geq \delta \\
  \dfrac{h}{\delta} ||f||_{L^{2}(I)}, & \text{if }h \leq \delta.
\end{cases}
\end{equation*}

Assertion 3: Suppose that $f \in H^{1}_{0}(I)$. The $H^{1}$-seminorm estimates
follow from Proposition \ref{ProjectionProp} and Lemma \ref{ELemma}
\begin{equation*}
  \delta |\Pi_L \ud - \ud |_{H^{1}(I)} \leq C \delta h
||\ud||_{H^{2}(I)} \leq C h ||f||_{H^{1}(I)}.
\end{equation*}

Assertion 2: Suppose that $f \in H^{1/2,\infty}(I)$. Decompose $f = f^{0}
+f^{1}$ for any $f^{0} \in L^{2}(I)$ and for any $f^{1} \in H^{1}_{0}(I)$. Let
$\ud^{i}$, for $i=0,1$, be the solution of equation \eqref{DiffEq} with
$\alpha_0 = \alpha_1=0$ but with right hand side equal to $f^{i}$. By linearity and by
the triangle inequality we have
\begin{multline*}
  \delta | \Pi_L \ud - \ud |_{H^{1}(I)} \leq \delta |\Pi_L \ud^{0}
- \ud^{0}|_{H_{1}(I)} + \delta | \Pi_L \ud^{1} - \ud^{1}|_{H^{1}(I)} \\
\leq C \begin{cases} ||f^{0}||_{L^{2}(I)} + h ||f^{1}||_{H^{1}(I)}, &
\text{for } h \geq \delta \\ \dfrac{h}{\delta} \left( ||f^{0}||_{H^{1}(I)} +
\delta ||f^{1}||_{H^{1}(I)} \right) , & \text{for } h \leq \delta \end{cases}
\end{multline*}
and by taking the infimum over all such decompositions of $f$ we
obtain that
\begin{equation*}
  \delta | \Pi_L \ud - \ud |_{H^{1}(I)} \leq C \begin{cases} 
K(h,f) & \text{for } h \geq \delta \\ \dfrac{h}{\delta} K(\delta,f), &
\text{for } h \leq \delta.
\end{cases}
\end{equation*}
Since
\begin{equation*}
  ||f||_{1/2,\infty} \geq \max\left(\dfrac{K(h,f)}{\sqrt{h}},
\dfrac{K(\delta,f)}{\sqrt{\delta}}  \right),
\end{equation*}
we obtain the desired estimate for the $H^{1}$-seminorm
\begin{equation*}
  \delta | \Pi_L \ud - \ud |_{H^{1}(I)} \leq C \begin{cases} 
\sqrt{h} ||f||_{H^{1/2,\infty}(I)}, & \text{for } h \geq \delta \\ \dfrac{
h}{\sqrt{\delta}} ||f||_{H^{1/2,\infty}(I)} & \text{for } h \leq
\delta. \end{cases}
\end{equation*}
\end{proof}
\subsubsection{Error estimate for the low-rank QTT approximation}
We are now in a position to prove our main theorem, Theorem
\ref{QTTapprox}. In particular, Theorem \ref{QTTapprox} establishes existence of
low-rank QTT-approximations converging exponentially fast to the solution
$u_{\delta}$ of problem \eqref{DiffEq} with respect to the number of degrees of
freedom and uniformly in $0 < \delta < 1$.
\begin{theorem}\label{QTTapprox}
There
exist $C_{1},b_{1} > 0$ such that, for all $0<\delta<1$,
if $\ud$ is the solution to \eqref{DiffEq} under the hypotheses \eqref{eq:hypfc}, and
for all $L\in \mathbb{N}$, if $\udL$ is the approximation given by Proposition
\ref{hpFEMTheorem}, then
\begin{equation*}
  || \PiLtwo \udL - u_{\delta}||_{\delta} \leq
C_{1} \left( e^{-b_{1} L} + \min \left(\sqrt{h},\frac{h}{\sqrt{\delta}} \right)
\right),
\end{equation*}
with $h = 1/(2^L+1)$. Furthermore, $\PiLtwo \udL$ admits a QTT decomposition
with respect to $\TunifintL$ with
QTT-ranks of order $\cO(L)$ and with number of parameters of order $\Ndof = \cO(L^{3})$.

With respect to the number of parameters $\Ndof$ of the QTT representation of $\PiLtwo
\udL$, the above inequality reads
\begin{equation}\label{dofEstimate}
  || \PiLtwo\udL - u_{\delta}||_{\delta} \leq C_{2} \exp\left(-b_{2} \Ndof^{1/3}  \right),
\end{equation}
with $C_2, b_2>0$, independent of $\delta$ and $L$.
\end{theorem}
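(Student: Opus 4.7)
The plan is to combine the splitting \eqref{fullEstimate} with the approximation and rank results already established. First, to reduce to homogeneous Dirichlet data I introduce $\ell(x) := \alpha_0 + (\alpha_1 - \alpha_0)x$, which lies in $V^L$ (so $\PiLtwo \ell = \ell$), and set $\tilde u_\delta := \ud - \ell$. Then $\tilde u_\delta$ solves \eqref{DiffEq} with homogeneous boundary conditions and modified right-hand side $\tilde f := f - c\ell$, which is again analytic on $\bar I$. Writing $\tilde u_{\delta, L} := \udL - \ell \in V_0(I, L, \mathcal{T}^{\hp}_{\lambda L \delta})$, one checks that $\|\tilde u_{\delta, L} - \tilde u_\delta\|_\delta = \|\udL - \ud\|_\delta$ and $\PiLtwo \udL - \ud = \PiLtwo \tilde u_{\delta, L} - \tilde u_\delta$, so both the error estimate and the QTT analysis can be carried out on the reduced problem.

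I next apply the splitting \eqref{fullEstimate} to $\tilde u_\delta$. The hp-FE term $\|\tilde u_\delta - \tilde u_{\delta, L}\|_\delta$ is bounded by $Ce^{-bL}$ thanks to Proposition \ref{hpFEMTheorem} with $p = L$. For the Galerkin term $\|\tilde u_\delta - \Pi_L \tilde u_\delta\|_\delta$ I invoke Proposition \ref{errorEstimates}: since $\tilde f$ is analytic on $\bar I$, it belongs to $H^1(I) \subset H^{1/2, \infty}(I)$ by Lemma \ref{interpolantIncl}, and Assertion 2 of Proposition \ref{errorEstimates} yields $\|\tilde u_\delta - \Pi_L \tilde u_\delta\|_\delta \leq C \min(\sqrt h, h/\sqrt \delta)$ with $C$ independent of $\delta$. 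Combining these with the $\Cstab$ factors coming from the stability bound \eqref{eq:PiLtwostability} delivers the first inequality of the theorem.

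For the QTT part, I note that the proof of Proposition \ref{PropQTTranks} uses only the piecewise polynomial structure of its input on the hp-grid (via Lemmas \ref{GPLemma2} and \ref{PiecewisePolyQTTLemma}), so it applies verbatim to $\tilde u_{\delta, L}$ with $p = L$, bounding the QTT-ranks of $\PiLtwo \tilde u_{\delta, L}$ by $CL$. Since $\ell$ has QTT-rank at most $2$ by Example \ref{ex2}, subadditivity of QTT-ranks under sums gives that $\PiLtwo \udL = \PiLtwo \tilde u_{\delta, L} + \ell$ has QTT-ranks $\cO(L)$. Definition \ref{QTT-dec} then yields the storage estimate $\Ndof = \cO(r^2 L) = \cO(L^3)$.

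Finally, to pass to \eqref{dofEstimate}, I invert $\Ndof \leq C L^3$ to obtain $L \geq c\, \Ndof^{1/3}$ for some $c > 0$. Both $e^{-b_1 L}$ and $\min(\sqrt h, h/\sqrt\delta) \leq \sqrt h \leq 2^{-L/2}$ then decay as $\exp(-b_2 \Ndof^{1/3})$ for a suitable $b_2 > 0$, giving the bound uniformly in $\delta$. The proof is essentially a bookkeeping exercise on top of Propositions \ref{hpFEMTheorem}, \ref{PropQTTranks} and \ref{errorEstimates}; the only point requiring care, rather than a real obstacle, is ensuring that every constant is taken independent of $\delta$, which is guaranteed because each invoked estimate is itself $\delta$-uniform.
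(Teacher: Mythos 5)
Your proof is correct and follows essentially the same route as the paper's: reduce to homogeneous boundary data by subtracting the affine lift, combine the splitting \eqref{fullEstimate} with Proposition \ref{hpFEMTheorem} and Proposition \ref{errorEstimates}, and bound the QTT ranks via Proposition \ref{PropQTTranks} plus subadditivity for the affine correction. If anything, your definition $\tilde u_{\delta,L} := \udL - \ell$ is a touch cleaner than the paper's (which introduces a separate $hp$-approximant $\vdL$ of $v_\delta$ and implicitly identifies $\PiLtwo \udL$ with $\PiLtwo \vdL + g$), since you apply the rank lemmas directly to the shifted approximant rather than assuming the $hp$ construction commutes with subtracting $\ell$.
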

\begin{proof} As we do not assume homogeneous boundary conditions on $\ud$, we introduce
\begin{equation*}
  v_{\delta} \coloneqq u_{\delta} - g,
\end{equation*}
where $g(x) \coloneqq \alpha_0 + (\alpha_1 - \alpha_0)x$. Then $v_{\delta}$ satisfies the following modified
differential equation with homogeneous boundary conditions
\begin{equation*}
  \begin{aligned}\label{DiffEqMod}
    - \delta^{2} v_{\delta}'' + v_{\delta} &= f- cg, \text{ in } I
    \\
    v_{\delta}(0) =v(1) &= 0.
\end{aligned}
\end{equation*}
Clearly, the regularity assumptions on $f$ from Section
\ref{statement} also hold for $f-cg$. In particular, by Lemma
\ref{interpolantIncl} and by the assumption \eqref{eq:hypfc}, we have that
$f-cg \in H^{1/2,\infty}(I)$. Hence, the last assertion of Proposition
\ref{errorEstimates} is applicable to $v_{\delta}$. Thus,
\begin{equation*}
  || \Pi_L v_{\delta} - v_{\delta} ||_{\delta} \leq C \min
\left( \sqrt{h},\frac{h}{\sqrt{\delta}} \right).
\end{equation*}
Moreover, $\Pi_L g = g$, as $g \in V^{L}$ ($g$ is affine), and
$\Pi_L : H^1(I) \to V^{L}$ is a projection. Therefore,
\begin{equation*}
  || \Pi_{L} u_{\delta} - u_{\delta}||_{\delta} = || \Pi_{L}
(u_{\delta}-g)-(u_{\delta}-g)||_{\delta}.
\end{equation*}
Since $v_{\delta} = u_{\delta} - g$, we obtain
\begin{equation}
  \label{eq:PiL-error}
  || \Pi_{L} u_{\delta} - u_{\delta}||_{\delta} \leq C \min
\left( \sqrt{h},\frac{h}{\sqrt{\delta}} \right).
\end{equation}
As we have already stated in equation \eqref{fullEstimate},
\begin{align*}
  || u_{\delta} - \PiLtwo \udL ||_{\delta}
  & \leq
  || u_{\delta} - \Pi_L u_{\delta} ||_{\delta}
  + ||\PiLtwo \left(\Pi_L\ud-\ud \right) ||_{\delta}
  + ||\PiLtwo \left(u_{\delta}-\udL \right) ||_{\delta} 
  \\ & \leq (1+\Cstab) || u_{\delta} - \Pi_L u_{\delta} ||_{\delta} + \Cstab\|\ud-\udL\|_{\delta}.
  \end{align*}
  Then, inequality \eqref{eq:PiL-error} and Proposition \ref{hpFEMTheorem} yield the desired error
estimate
\begin{equation*}
  || \PiLtwo \udL - u_{\delta}||_{\delta} \leq C
\left( e^{-b L} + \min \left(\sqrt{h},\frac{h}{\sqrt{\delta}} \right) \right).
\end{equation*}
Let now $\vdL$ be the approximation to $v_\delta$  provided by
Proposition \ref{hpFEMTheorem}, with $p=L$.
By Proposition \ref{PropQTTranks}, $\PiLtwo \vdL$ has a QTT representation with
respect to $\TunifintL$ with QTT-ranks of order $\cO(L)$. Then,
\begin{equation*}
  \PiLtwo \udL = \PiLtwo\vdL + g,
\end{equation*}
and $g$ is an affine function in $I$, hence it has a QTT representation with
respect to $\TunifintL$ with rank bounded by $L$.
The QTT-rank of the sum of two QTT-formatted vectors is bounded by the sum of
their ranks \cite{OseledetsTTdec}. Therefore, $\PiLtwo \udL$ admits a QTT decomposition with respect to
$\TunifintL$ with ranks $\cO(L)$, and the number of parameters of the
QTT decomposition of $\PiLtwo\udL$ is of order $\cO(L^{3})$. The
last assertion now follows directly, as there exist $\widetilde{C},
\widetilde{b}>0$ independent of $L$, such that
\begin{equation*}
  \min \left( \sqrt{h}, \frac{h}{\sqrt{\delta}} \right) \leq 
\sqrt{h} \leq  2^{\frac{1}{2}(1-  L) } \leq \widetilde{C} \exp\left(-\widetilde{b} \sqrt[3]{\Ndof}\right).
\end{equation*}
\end{proof}

\section{QTT-formatted computations and preconditioning}
\label{numerics}

As in previous sections, we consider equation~\eqref{DiffEq} and approximate the solution in low-order
$\mathbb{P}_1$ finite element space. 
To solve the arising linear system \eqref{sysM}, 
one can assemble the system $A_L$ and approximate the right-hand side $f_L$ directly in the QTT format.
In particular, the system matrix and the load vector can be represented in approximate low rank QTT formulation, through a combination of exact representations~\cite{kazeev2012low} and adaptive sampling~\cite{oseledets2010tt}
The system of linear equations can then be approximately solved using well-established optimization-based algorithms such as the alternating minimal energy solver (AMEn)~\cite{AMEnLinearSystems}.
Nevertheless, recent studies~\cite{chertkov2016robust,BachmayrStability,RakhubaRobustSolver} have indicated that stability issues may occur when using this approach for large $L$.

To overcome this problem, a BPX-type preconditioner in the QTT format was proposed in~\cite{BachmayrStability}.
Instead of the standard left BPX preconditioner, the authors proposed a symmetric two-sided version that ensures robustness in QTT format.
The approach is applicable for a wide range of elliptic PDEs, but does not provide $\delta$-robustness, so we have modified the preconditioner for our purposes.

Let us briefly introduce the original preconditioner and its modification.
The implementation details are presented in Appendix~\ref{sec:assembly_prec}.
For all $\ell\in\mathbb{N}$, we introduce the piecewise linear basis functions $\hat\phi_{\ell,j}$ that satisfy
\[
	\hat\phi_{\ell,j} (2^{-\ell}i) = 2^{\ell/2} \delta_{ij}, \qquad \forall i, j = 0,\dots,2^\ell.
\]
Let then $\hat P_{\ell, L} \in\mathbb{R}^{2^L \times 2^\ell}$ be the matrix
associated to the canonical injection from $\mathrm{span}(\hat\phi_{\ell,j})$
into $\mathrm{span}(\hat\phi_{L,j})$. We introduce the symmetric preconditioner
\begin{equation}\label{eq:bpx_prec}
	\widetilde C_L = \sum_{\ell=0}^{L} 2^{-\ell} P_{\ell,L} P_{\ell,L}^{T}
\end{equation}
so that the preconditioned system is $\widetilde C_L A_L \widetilde C_L
\bar{u}_{L} = \widetilde C_L f_L$.
This preconditioner was first introduced in~\cite{BachmayrStability} and is a symmetric version of the classic BPX preconditioner~\cite{bramble1990parallel}.
The modification to $\widetilde C_L$ that we propose to use for singularly perturbed problems reads
instead
\begin{equation}\label{eq:prec_perturbed}
	 C_{L} = \sum_{\ell=0}^{L} \bpxcoef P_{\ell,L} P_{\ell,L}^{T}
\end{equation}
where $\bpxcoef$ is chosen to ensure independence of the condition number in
terms of $\delta$. Specifically, we choose $\bpxcoef = \min (2^{-\ell}\delta^{-1},1)$.
Note that $\bpxcoef = (1 + \delta^2 2^{2\ell})^{-1}$ is used in~\cite{bramble2000computational} for one-sided preconditioner, so the square root is applied for its two-sided version;
when $\delta \gg 2^{-\ell}$ and $\delta \ll 2^{-\ell}$, our choice has the same
behavior as the choice $\bpxcoef = (1 + \delta^2 2^{2\ell})^{-1/2}$.
Details of the assembly of~\eqref{eq:prec_perturbed} and of the preconditioned matrix $C_{L} A_L C_{L}$ are deferred to Section~\ref{sec:assembly_prec}.

\section{Numerical experiments}
\label{sc:TT-numerics}
In this section we
consider the following instance of problem \eqref{DiffEq}:
\begin{equation}
  \begin{aligned}\label{diffEq}
    &- \delta^{2} u_{\delta}'' + u_{\delta} = 0 \text{ in } I, \\
    &u_{\delta}(0) = 0,\, u_{\delta}(1)=1,
\end{aligned}
\end{equation}
where $0<\delta<1$ is the perturbation parameter.
The corresponding Dirichlet-Neumann problem as in equation
\eqref{DNdiffEqMod} then reads
\begin{equation}
\begin{aligned} - \delta^{2} v_{\delta}''(x) + v_{\delta}(x) &= -x \text{ in }I,
\\ v_{\delta}(0) = v_{\delta}'(1)&=0.
\end{aligned}
\end{equation}
We use the publicly available TT-Toolbox\footnote{\url{https://github.com/oseledets/TT-Toolbox}} as
a basis for our experiments and to perform fundamental operations in QTT format.
Problem \eqref{diffEq} has the
exact solution
\begin{equation}\label{exactSol}
  u_{\delta}(x) =
\frac{e^{\frac{x}{\delta}}-e^{-\frac{x}{\delta}}}{e^{\frac{1}{\delta}}-e^{-\frac{1}{\delta}}}
=
\frac{e^{\frac{x-1}{\delta}}-e^{-\frac{x+1}{\delta}}}{1-e^{-\frac{2}{\delta}}}.
\end{equation}

Let $\udqttvec\in \mathbb{R}^{2^L}$ be the vector corresponding to the 
QTT-compressed numerical solution of problem \eqref{diffEq}, and let
\begin{equation*}
  \udqtt(x) = \sum_{i=1}^{2^L} \left( \udqttvec \right)_i \phi_i(x) , \qquad \forall x\in (0,1),
\end{equation*}
where $\{\phi\}_{i=1}^{2^L}$ is the Lagrange basis associated with $\TtildeL$.
The error $e_{\delta}$ is computed as
\begin{equation}\label{erroDef}
  e_{\delta} \coloneqq \| \ud- \udqtt\|_{\delta}.
\end{equation}
In practice, due to its size, the vector $\udqttvec$ is not explicitly
treatable, and the computation of \eqref{erroDef} is done in QTT format.

We use a DMRG solver for all algebraic equations in our numerical simulations
and we denote by
$\epstol > 0$ tolerance level for termination of the DMRG iterations,
measured by the Frobenius norm of the residual.
We also use the same value $\epstol$ as an accuracy parameter for rounding of
QTT-formatted tensors;
see  \cite{OseledetsTTdec} for more details on rounding in the
TT-format. The values of $\epstol$ used in the computations will be specified on
a case-by-case basis.
\subsection{Non-preconditioned system}
\begin{figure} \centering
\includegraphics[width=0.5\textwidth]{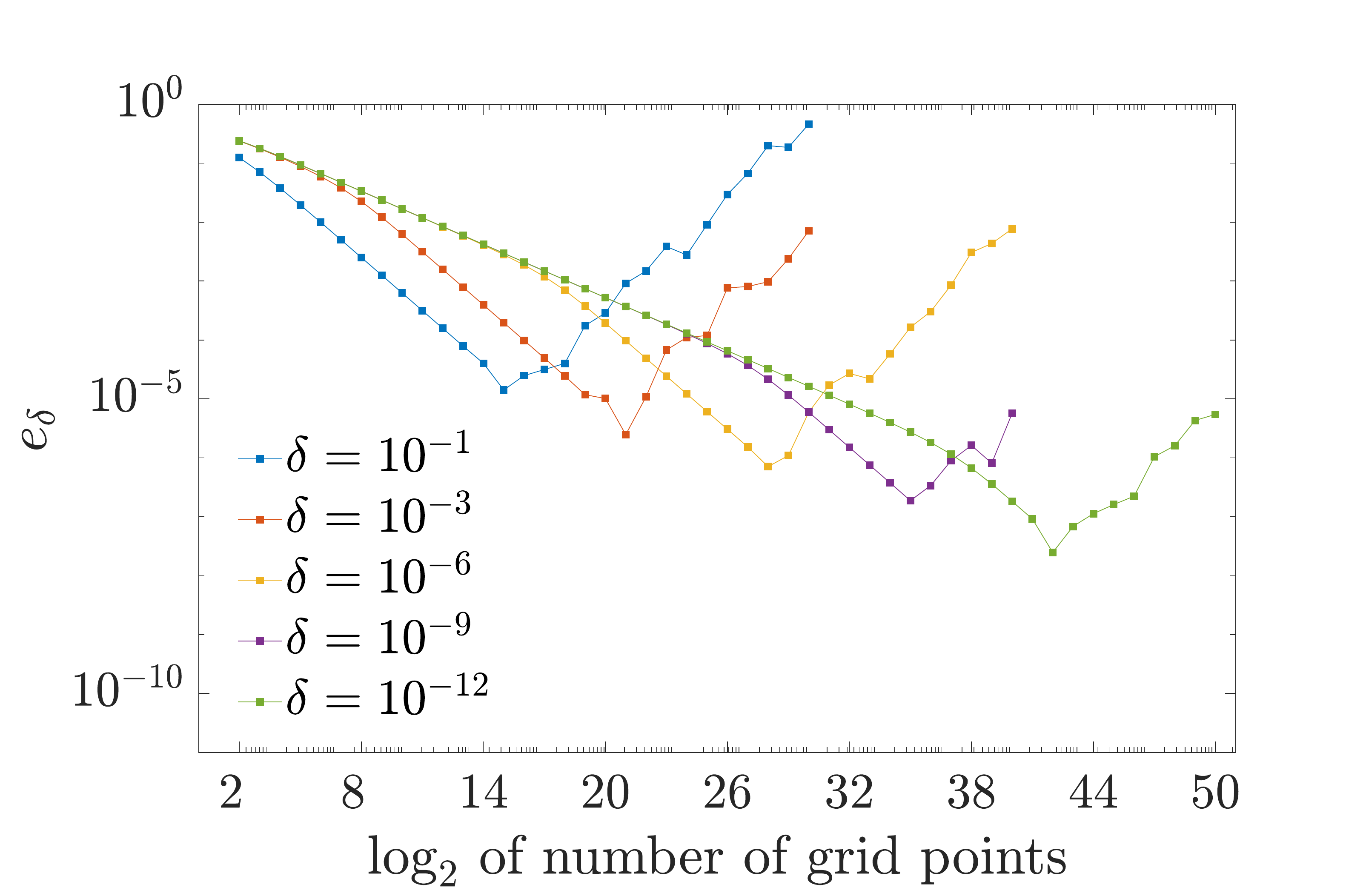}
\caption{Error $e_{\delta}$, obtained without preconditioner, as a function of the
logarithm of the number of grid points, for $\delta
\in \{ 10^{-1}, 10^{-3}, 10^{-6}, 10^{-9}, 10^{-12} \}$ and with $\epstol =
10^{-10}$.}
\label{fig:WO_WO_E_norm_8_10}
\end{figure}

We first consider the non-preconditioned, direct application of the DMRG
solver.  We assemble the FE matrices in
the QTT format and then solve the resulting system with the DMRG solver.
Figure \ref{fig:WO_WO_E_norm_8_10} displays the
error $\ed$ obtained for varying values of $\delta$ and with $\epstol = 10^{-10}$.
We observe instabilities for all
considered values of $\delta$. Choosing a different
value of $\epstol$ did not influence the results.

We remark that we see three difference regions in the behavior of the error and that
the first two regions correspond to the expected theoretical
rates of convergence of the non compressed system, obtained in Proposition \ref{errorEstimates}.
Namely, denoting $h = 1/(2^L+1)$,
\begin{itemize}
  \item for $L\lesssim \left|\log_2\delta  \right|$ (i.e., $h > \delta$), we observe convergence of order $\cO(h^{1/2})$,
  \item for $L\gtrsim \left| \log_2\delta \right|$ (i.e., $h < \delta$), we observe convergence of order $\cO(h)$,
\end{itemize}
while for large values of $L$ the approximation is unstable.

\subsection{Preconditioned system}
\begin{figure} \centering
\begin{subfigure}{.5\textwidth} \centering
\includegraphics[width=\textwidth]{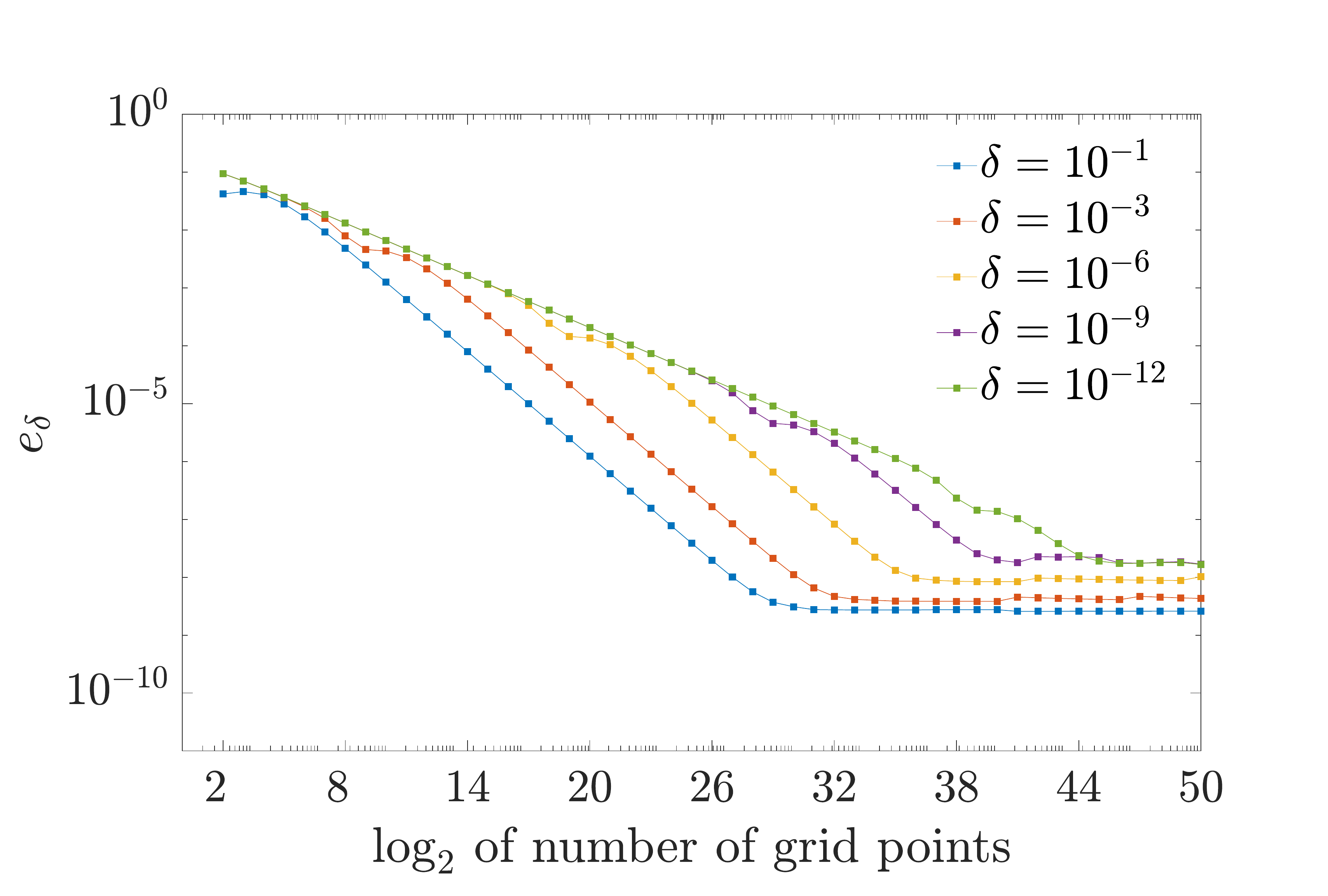}
\caption{$\epstol = 10^{-8}$} \label{fig:W_W_E_norm_8_10_new_a}
\end{subfigure}%
\begin{subfigure}{.5\textwidth} \centering
\includegraphics[width=\textwidth]{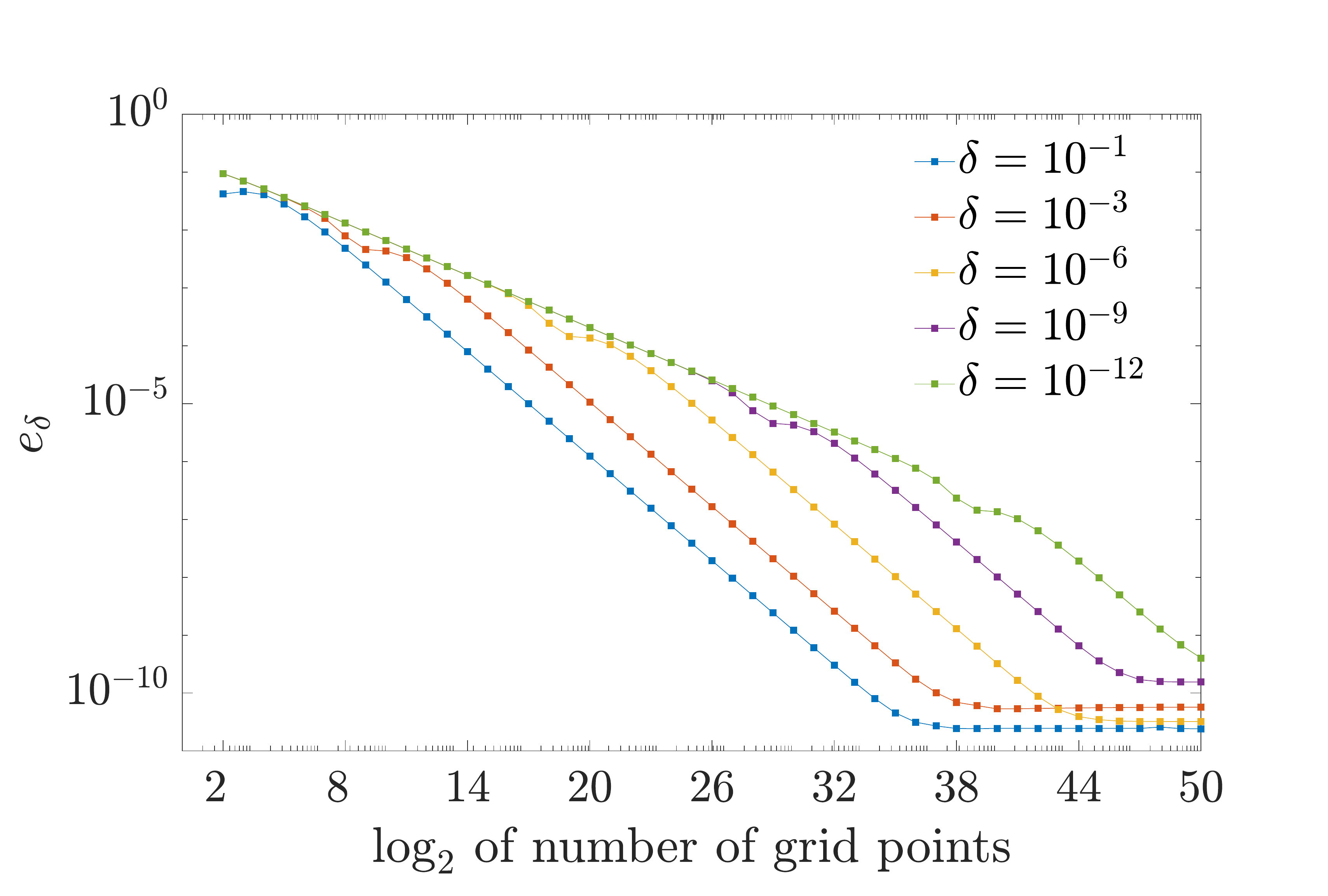}
\caption{$\epstol = 10^{-10}$} \label{fig:W_W_E_norm_8_10_new_b}
\end{subfigure} \\
\begin{subfigure}{.5\textwidth} \centering
\includegraphics[width=\textwidth]{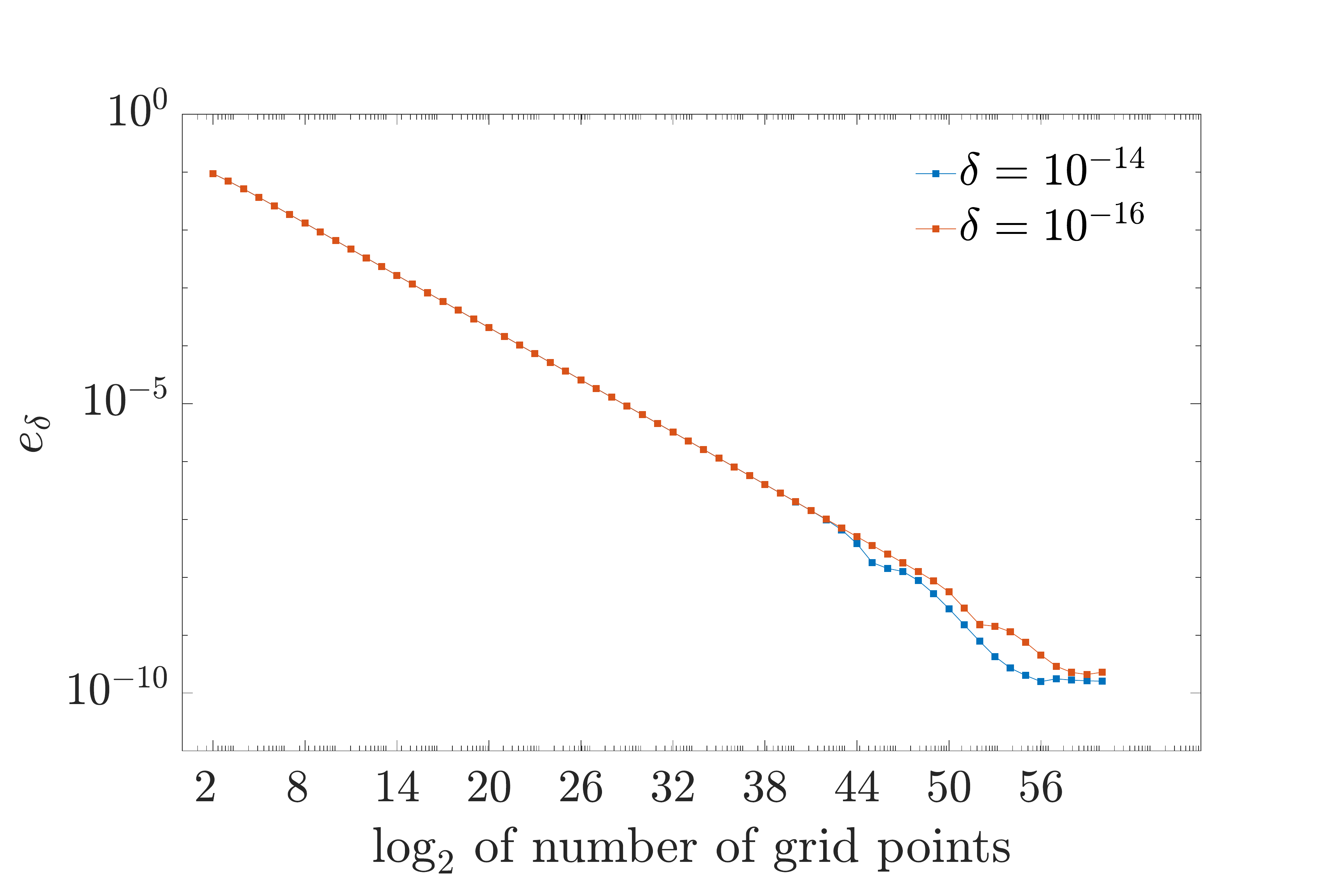}
\caption{$\epstol = 10^{-10}$} \label{fig:W_W_E_norm_8_10_new_c}
\end{subfigure}%
\caption{Error $e_{\delta}$ as a function of the
logarithm of the number of grid points for computations with preconditioner and $\delta \in
\{ 10^{-1}, 10^{-3}, 10^{-6}, 10^{-9}, 10^{-12} \}$.} \label{fig:W_W_E_norm_8_10_new}
\end{figure}
The numerical experiments in the previous
section illustrate the need for preconditioning the QTT-compressed version of
the algebraic equation \eqref{sysM} obtained from the $\mathbb{P}_{1}$-FE
method.
In this section we present the results from the simulations of
the preconditioned and modified system, as described in Section \ref{numerics}. 
The simulations for varying values of $\delta$ and $\epstol$ are shown in Figure \ref{fig:W_W_E_norm_8_10_new}. We observe no instabilities, as were
present in the non-preconditioned system.
We also observe the same asymptotic rates observed for the stable region of the
non-preconditioned solver and theoretically predicted in Proposition \ref{errorEstimates};
in this case, though, for large values of $L$ the errors reach a plateau. The comparison of Figures
 \ref{fig:W_W_E_norm_8_10_new_a} and \ref{fig:W_W_E_norm_8_10_new_b} shows that
 this depends on the choice of $\epstol$.
\begin{figure} \centering
\includegraphics[width=.5\textwidth]{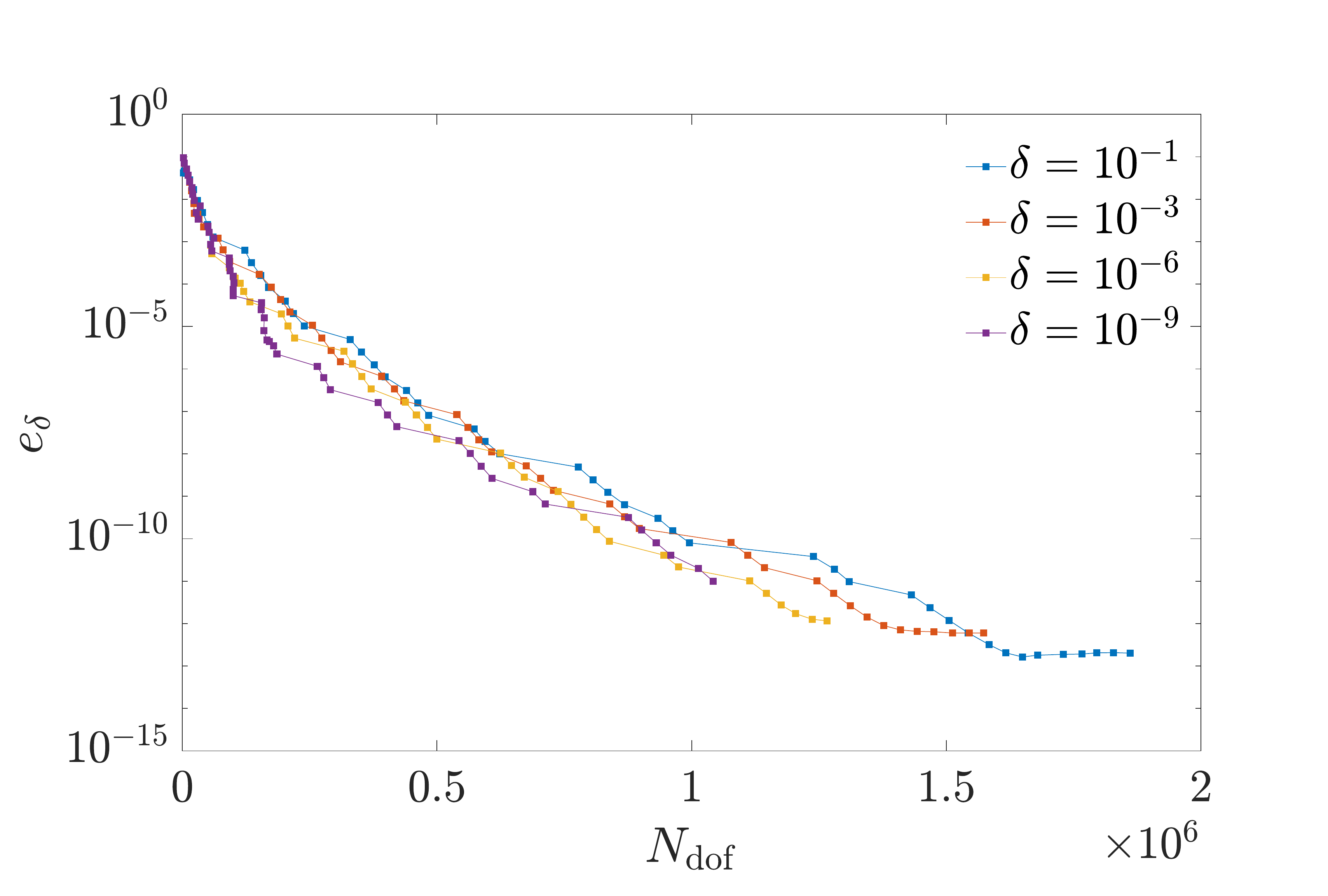}
\caption{Error $e_{\delta}$ as function of the number of
degrees of freedom for computations with preconditioner, adaptive choice of
$\epstol$, and for $\delta \in \{ 10^{-1}, 10^{-3},
10^{-6}, 10^{-9} \}$.}
\label{fig:W_W_E_norm_dof_new}
\end{figure}

The error plot we have considered so far have all considered the variation of
the error for a fixed truncation value $\epstol$. Estimate \eqref{dofEstimate}
of Theorem \ref{QTTapprox}, though, gives a theoretical exponential rate of
convergence, with constants independent of $\delta$. This can be realized, in
practice, by choosing adaptively the truncation parameter $\epstol$, so that,
for each $\delta$ and each $L$, the truncation error is of the same order of
magnitude as the finite element error.
Figure \ref{fig:W_W_E_norm_dof_new} displays the error $e_{\delta}$ as a
function of the number of degrees of freedom for solutions obtained with the
adaptive choice of the
accuracy parameter $\epstol$ outlined. Specifically, for each $\delta$ and for each $L$,
we choose the biggest $\epstol$ such that the error obtained is at
most $ 10\% $ larger than the error $e_{\delta}$ obtained with $\epstol = 10^{-12}$.

\begin{figure} \centering
\begin{subfigure}{.5\textwidth} \centering
\includegraphics[width=\textwidth]{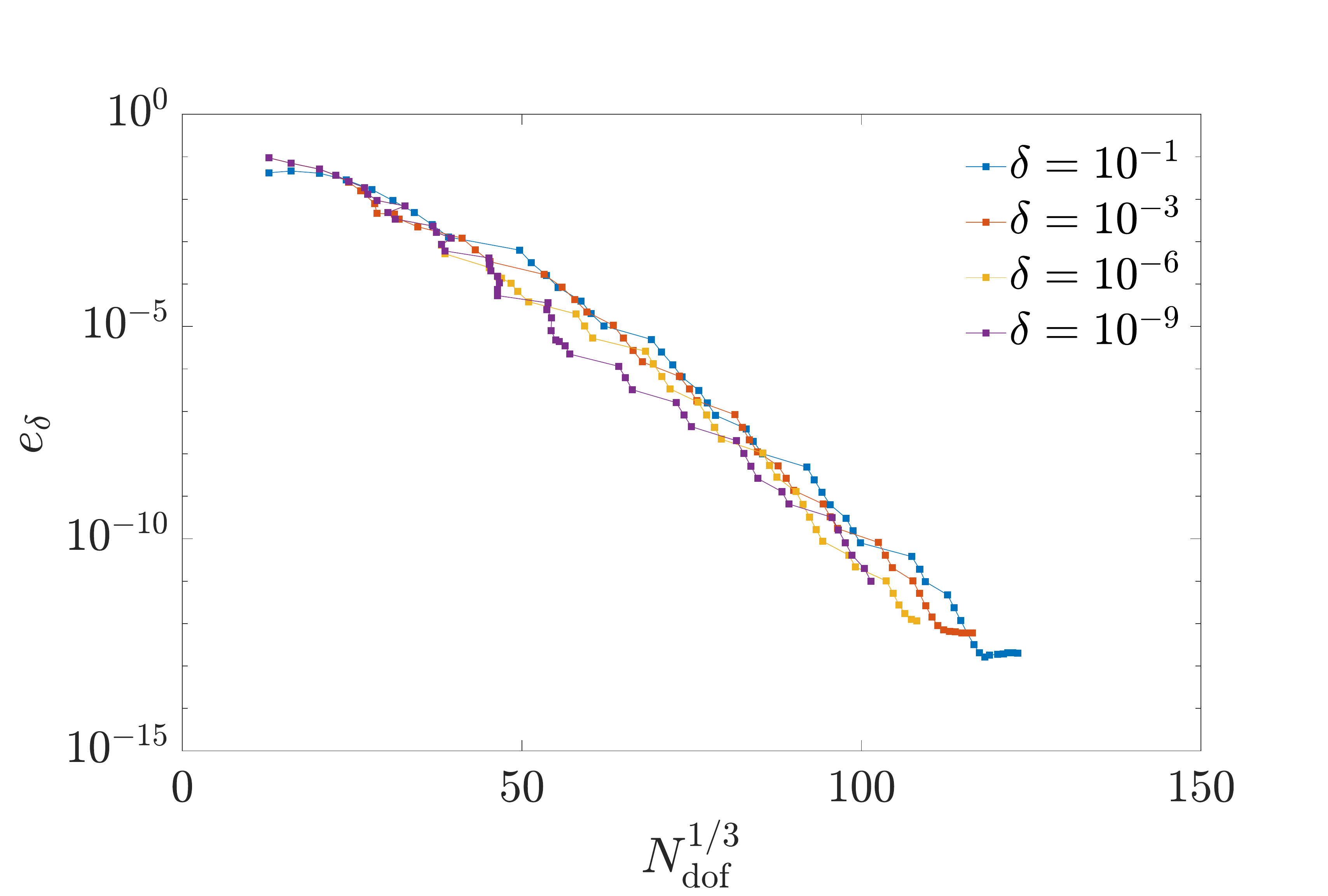}
\end{subfigure}%
\begin{subfigure}{.5\textwidth} \centering
\includegraphics[width=\textwidth]{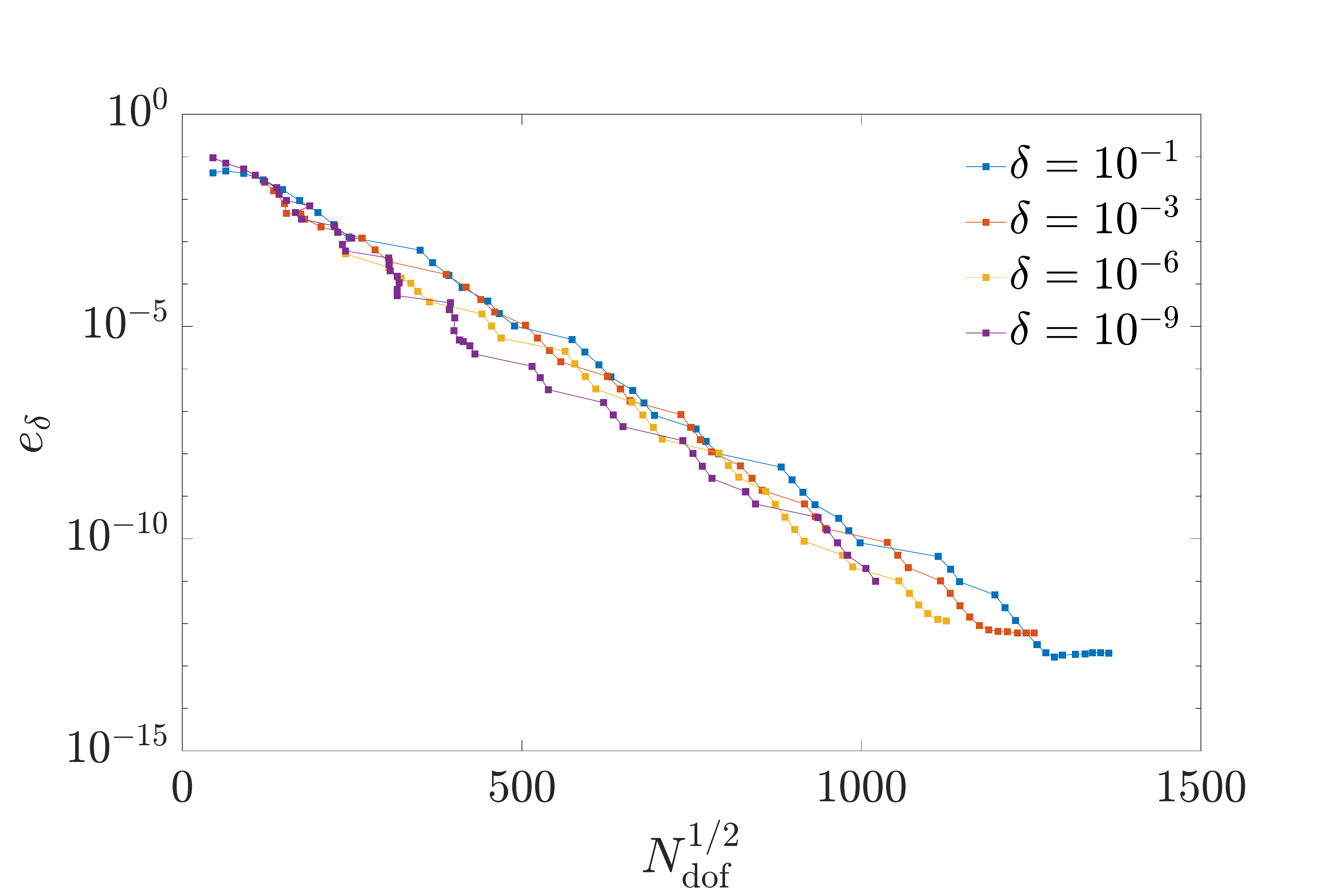}
\end{subfigure}
\caption{Error $e_{\delta}$ as a function of the cube root, left,
  and the square root, right, of the number of degrees of freedom
  for computations with preconditioner, adaptive choice of
$\epstol$, and $\delta \in \{ 10^{-1}, 10^{-3}, 10^{-6}, 10^{-9}\}$.}
\label{fig:W_W_E_norm_dof_2_new}
\end{figure}

\begin{figure} \centering
\includegraphics[width=.5\textwidth]{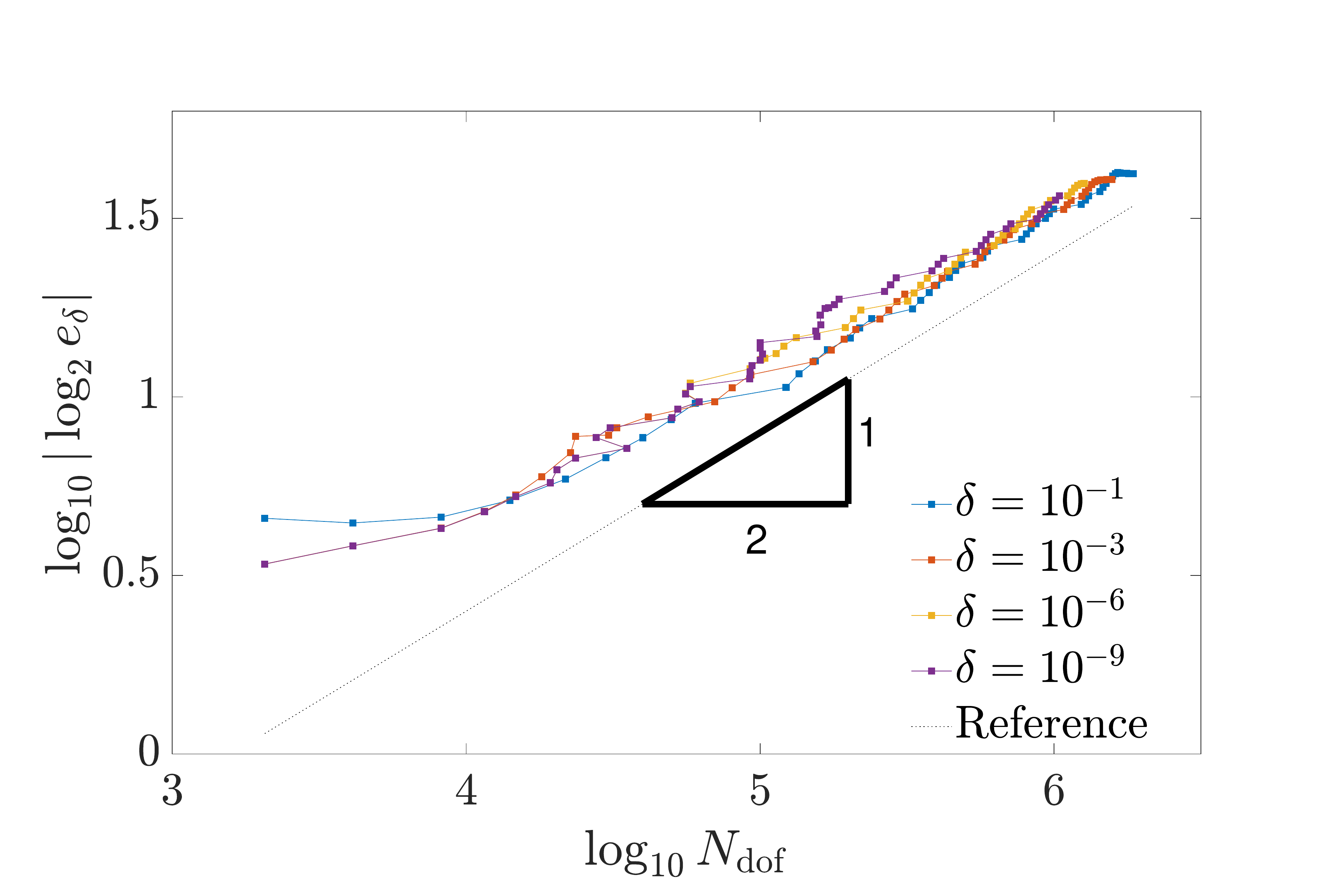}
\caption{Convergence rate coefficient estimation for $\delta
\in \{ 10^{-1}, 10^{-3}, 10^{-6}, 10^{-9} \}$.}
\label{fig:W_W_E_norm_dof_3_new}
\end{figure}
The same quantity is displayed in Figure \ref{fig:W_W_E_norm_dof_2_new}, as a
function of two different roots of the number of parameters of the QTT representation of the solution.
It appears that the exponent $1/3$ in equation \eqref{dofEstimate} of Theorem
\ref{QTTapprox} constitutes, in this case, an underestimation of the rate of convergence.
To properly study the experimental rate of convergence, we start from the ansatz
that there exist $C, b, \kappa>0$  such that
\begin{equation}\label{fitEq}
  e_{\delta}  = C 2^{-b
\left(\Ndof \right)^{\kappa}},
\end{equation}
then, assuming that $\left|  \log_2C\right|$ is small in comparison to $b \Ndof^\kappa$,
\begin{equation*}
  \left|\log_{2} e_{\delta}   \right|= \left|\log_{2} C - b \left(\Ndof
\right)^{\kappa}   \right|\approx  b \left(\Ndof \right)^{\kappa},
\end{equation*}
Hence
\begin{equation*}
  \log_{10}| \log_{2} e_{\delta} | \approx \log_{10} b + \kappa \log_{10} \Ndof,
\end{equation*}
We plot then, in Figure
\ref{fig:W_W_E_norm_dof_3_new}, $\log_{10}\left| \log_2\ed \right|$ as a
function of $\log_{10}\Ndof$, and estimate the exponent $\kappa$. In accordance with our
previous observation, the values obtained strongly indicate that $\kappa=1/2$
for this set of computations.

\section{Conclusions and future work}\label{sec:conclusion}
We have proved that the solutions of singularly perturbed PDEs in one dimension
admit low rank QTT decompositions and that the energy norm of the error
converges exponentially with respect to the third root of the number of
parameters of the QTT decomposition, independently of the perturbation parameter. This is confirmed in a
numerical test case, where we observe slightly better rates of convergence than
prescribed by the theory, with errors independent of the perturbation parameter
if an adaptive truncation strategy is chosen. Furthermore, the preconditioned
system is stable at all perturbation scales and allows for the resolution of the
boundary layer for very small perturbation parameters.

The natural extension of this work is the analysis of singularly perturbed
problem in higher physical dimensions. This will be the subject of future
investigation; we remark that our strategy to obtain rank bounds through high
order approximation and $L^2$ projection extends rather naturally to this case.

\appendix
\section{Assembly of preconditioner} \label{sec:assembly_prec}
We discuss here the details on the construction of the preconditioner.
We follow the construction in \cite{BachmayrStability}, while introducing a
modification to obtain stability independent of the perturbation parameter.

For all $0<\delta <1$, and for $f, c\in L^2(I)$, 
we introduce the singularly perturbed problem with homogeneous Dirichlet-Neumann boundary conditions of finding $\vd\in
H^1(I)$ such that 
\begin{equation}\label{eq:DN-sp}
  \begin{aligned}
    - \delta^{2} v_{\delta}'' + c v_{\delta} = f, &\text{ in } I,
    \\
    v_{\delta}(0)  = v_{\delta}'(1)=0.&
\end{aligned}
\end{equation}
Let then $\tA_L\in \mathbb{R}^{2^L\times 2^L}$ and $\tf_L\in \mathbb{R}^{2^L}$
be, respectively, the matrix and the right hand side corresponding to the
$\mathbb{P}_1$ FE discretization of \eqref{eq:DN-sp} on the grid with nodes
$\{j2^{-L}: j\in\range{2^L}\}$.

We introduce the following notation for the preconditioned system:
\begin{equation*}
  B_{L} \coloneqq C_{L} \tA_{L} C_{L},\qquad g_{L} \coloneqq C_{L}
\tf_{L},
\end{equation*}
where the matrix $C_L$ has been introduced in \eqref{eq:prec_perturbed}.
In order to solve the linear system $B_{L} \bar{u}_{L} = g_{L}$
in QTT format, we need to assemble $B_{L}$ and $g_{L}$. As
shown in Section 2.3 and 2.4 in \cite{BachmayrStability}, there exist
matrices $Q_{L,0}, Q_{L,1}, \Lambda_{L,0}$, and $\Lambda_{L,1}$ of small fixed QTT-rank such that
\begin{equation}\label{sysMPre}
  B_{L} = Q_{L,0}^{T} \Lambda_{L,0} Q_{L,0} +
Q_{L,1}^{T} \Lambda_{L,1} Q_{L,1}.
\end{equation}

In what follows, we use normal parentheses $(\ )$ to indicate
matrices and vectors and we use square brackets $[\ ]$ to indicate block
structures with matrices or vectors as elements. Superscript $T$ on a matrix
denotes the usual matrix transposition and superscript $T$ on a block structure
(with elements being matrices or vectors) refers to transposition of the
individual block elements.

\subsection{Operations on TT-cores}
We introduce some additional notation in order to present the explicit
construction of the preconditioner.
We represent TT-cores as block matrices and introduce two operations on TT-cores
denoted by $\bullet$ and $\Join$. In Definition \ref{TT-dec} we referred to
$V^{1}, \ldots ,V^{d} \in \mathbb{R}^{r \times n \times r}$ as the TT-cores of
$A$ with $r \in \mathbb{N}$ being the TT-rank of $A$ and $n \in
\mathbb{N}$ the mode size of $A$. We generalize the definition mentioned above
and introduce TT-cores of
ranks $p \times q$ and mode sizes $m \times n$, for $p, q, m, n\in \mathbb{N}$.
\begin{definition}[{\cite[Section 3.2]{BachmayrStability}}] Let $m,n,p,q \in
\mathbb{N}$ and let $U^{[\alpha,\beta]} \in \mathbb{R}^{m \times n}$ be tensors
of sizes $m \times n$, for $\alpha = 1, \ldots ,p$ and for $\beta = 1, \ldots
,q$. We call the $4$-tensor $U \in \mathbb{R}^{p \times m \times n \times q}$
defined by
\begin{equation*}
  U(\alpha, i,j, \beta) = U^{[\alpha,\beta]}_{ij},
\end{equation*}
for all $\alpha = 1, \ldots ,p$, $i=1, \ldots ,m$, $j = 1,
\ldots ,n$ and $\beta = 1, \ldots ,q$, a TT-core of ranks $p \times q$ and of
mode sizes $m \times n$. Given a $TT$-core $U$ of ranks $p \times q$ and of mode
sizes $m \times n$, we call each tensor $U^{[\alpha,\beta]}$, $\alpha=1, \ldots
,p$ and $\beta = 1, \ldots ,q$, the $(\alpha,\beta)$-block of $U$.
\end{definition}
As a TT-core $U$ of ranks $p \times q$ and of mode sizes $m
\times n$ is specified by the $2$-tensors $U^{[\alpha,\beta]}$, $\alpha = 1,
\ldots ,p$ and $\beta = 1, \ldots ,q$, for ease of exposition and for
convenience we can specify $U$ in the block structure 
\begin{equation}\label{coreBlock}
  U = \left[ \begin{matrix} U^{[1,1]} & \cdots &
U^{[1,q]} \\ \vdots & \ddots & \vdots \\ U^{[p,1]} & \cdots &
U^{[p,q]} \end{matrix}
\right].
\end{equation}
We use the representation in equation \eqref{coreBlock} whenever
we specify TT-cores. Then, by our convention for transposition, we have that
\begin{equation*}
  U^{T}(\alpha,i,j,\beta) = U(\alpha,j,i,\beta),
\end{equation*}
or equivalently,
\begin{equation*}
  (U^{T})^{[\alpha,\beta]} = (U^{[\alpha,\beta]})^{T}.
\end{equation*}
\begin{definition}[{\cite[Section 3.2]{BachmayrStability}}] Let $m,n,p,q \in
\mathbb{N}$ and let $U$ be a TT-core of ranks $p \times q$ and of mode sizes $m
\times n$. We define the $(i,j)$-\myemph{slice} $U^{\{ i,j \}} \in \mathbb{R}^{p
\times q}$ of $U$ as the matrix defined by
\begin{equation*}
  U^{\{ i,j \}}_{\alpha,\beta} = U(\alpha,i,j,\beta).
\end{equation*}
\end{definition}
In order to combine different TT-cores, we introduce two
operations $\bullet$ and $\Join$ on TT-cores.
\begin{definition}[{\cite[Definition 1]{BachmayrStability}}] Let $p,q,r \in
\mathbb{N}$ and let $m_{1},m_{2},n_{1},n_{2} \in \mathbb{N}$. Consider two
TT-cores $U$ and $V$ of ranks $p \times r$ and $r \times q$ and of mode $m_{1}
\times m_{2}$ and $n_{1} \times n_{2}$, respectively. The \myemph{strong Kronecker
product} $U \Join V$ of $U$ and $V$ is the TT-core of rank $p \times q$ and mode
size $m_{1} m_{2} \times n_{1} n_{2}$ given, in terms of matrix multiplication
of slices of sizes $p \times r$ and $r \times q$, by
\begin{equation*}
  (U \Join V)^{\{ i_{1} i_{2}, j_{1} j_{2} \}} \coloneqq
U^{\{i_{1},j_{1}\}} V^{\{i_{2},j_{2}\}}
\end{equation*}
for all combinations $i_{k} \in \{ m_{1}, \ldots ,m_{k}\}$ and
$j_{k} \in \{ 1, \ldots ,n_{k} \}$ with $k=1,2$.
\end{definition}
\begin{definition}[{\cite[Definition 2]{BachmayrStability}}] Let $p,p',r,r' \in
\mathbb{N}$ and let $m,n,k \in \mathbb{N}$. Consider two TT-cores $A$ and $B$ of
ranks $p \times p'$ and $r \times r'$ and of mode size $m \times k$ and $k
\times n$, respectively. The \myemph{mode core product} $A \bullet B$ of $A$ and
$B$ is the TT-core of rank $pr \times p'r'$ and mode size $m \times n$ given, in
terms of matrix multiplication of blocks of size $m \times k$ and $k \times n$,
by
\begin{equation*}
  (A \bullet B)^{\alpha \beta, \alpha' \beta'} \coloneqq
A^{[\alpha,\alpha']} B^{[\beta,\beta']}
\end{equation*}
for all combinations of $\alpha = 1, \ldots ,p$, $\alpha' = 1,
\ldots ,p'$, $\beta = 1, \ldots ,r$ and $\beta' = 1, \ldots ,r'$.
\end{definition}

\subsection{Construction of the preconditioner}
We start by introducing some building block that will be necessary for  
the explicit
TT-decompositions of the constituents of equation \eqref{sysMPre}:
\begin{equation*}
  A_{b} \coloneqq A \bullet A,\ U_{b} \coloneqq U \bullet
U^{T},\ X_{b} \coloneqq X \bullet X^{T},\ P_{b} \coloneqq P \bullet P,
\end{equation*}
where
\begin{equation*}
  U \coloneqq \left[ \begin{matrix} I & J^{T} \\ &
      J \end{matrix}
  \right],\ X \coloneqq \frac{1}{2} \left[ \begin{matrix}
\Bigg(\begin{matrix} 1 \\ 2 \end{matrix}
\Bigg) & \Bigg(\begin{matrix} 0 \\
1 \end{matrix}
\Bigg) \\ \Bigg(\begin{matrix} 1 \\ 0 \end{matrix}
\Bigg) &
\Bigg(\begin{matrix} 2 \\ 1 \end{matrix}
\Bigg) \end{matrix}
 \right],\ P
\coloneqq \left[\begin{matrix} 1 \\ 0 \end{matrix}
 \right]
\end{equation*}
\begin{equation*}
  I \coloneqq \left( \begin{matrix} 1 & 0 \\ 0 & 1 \end{matrix}
\right),\ J = \left( \begin{matrix} 0 & 1 \\ 0 & 0 \end{matrix}
 \right)
\end{equation*}
 and for $\alpha \in \{ 0,1 \}$
 \begin{equation*}
   W_{\alpha} \coloneqq T_{\alpha} \bullet \bar{I},\ Z_{\alpha}
\coloneqq Y_{\alpha} \bullet X^{T},\ K_{\alpha} \coloneqq N_{\alpha} \bullet P,
\end{equation*}
 where
 \begin{equation*}
   T_{1} \coloneqq \left[ \begin{matrix} 1 \\ 1 \end{matrix}
\right],\ \bar{I} \coloneqq \left[ \begin{matrix} 1 & 0 \\ 0 & 1 \end{matrix}
\right],\ Y_{0} \coloneqq \frac{1}{2} \left[ \begin{matrix} \Bigg(\begin{matrix}
1 \\ 1 \end{matrix}
 \Bigg) & \Bigg( \begin{matrix} 0 \\ 0 \end{matrix}
 \Bigg) \\
\Bigg(\begin{matrix} -1 \\ 1 \end{matrix}
\Bigg) & \Bigg(\begin{matrix} 1 \\
1 \end{matrix}
\Bigg) \end{matrix}
 \right]
\end{equation*}
\begin{equation*}
  Y_{1} \coloneqq \frac{1}{2} \left[ \Bigg(\begin{matrix} 1 \\
1 \end{matrix}
\Bigg) \right] ,\ N_{1} \coloneqq [1],\ N_{0} \coloneqq
\frac{1}{2} \left[ \begin{matrix} \Bigg(\begin{matrix}1 \\ 0 \end{matrix}
\Bigg)
\\ \Bigg(\begin{matrix} 0 \\ 1 \end{matrix}
\Bigg) \end{matrix}
 \right].
\end{equation*}
 Recall the different brackets $(\ )$ and $[\ ]$ and their
respective meanings related to construction of TT-cores. We now state the
TT-decompositions of the constituents of equation \eqref{sysMPre}.
\begin{lemma}\label{C-dec}
  For any $L \in \mathbb{N}$, the matrix $C_{L}$
admits the TT-decomposition
\begin{equation}\label{CL1}
  C_{L} = [\begin{matrix} A_{b} & \mu_{0,\delta}
A_{b} \end{matrix}
 ] \Join C_{1,L} \ldots \Join C_{L,L} \Join
\left[\begin{matrix} \\ P_{b} \end{matrix}
 \right],
\end{equation}
 with TT-ranks equal to $8$ and where
 \begin{equation}\label{CL2}
   C_{\ell,L} = \left[ \begin{matrix} U_{b} &
\bpxcoef U_{b} \\ & 2^{-1} X_{b} \end{matrix}
 \right] , \qquad \text{for all 
} \ell=1, \ldots ,L.
\end{equation}
\end{lemma}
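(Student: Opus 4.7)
The plan is to adapt the construction from \cite[Section 3.3]{BachmayrStability} used for the original BPX preconditioner $\widetilde{C}_L = \sum_{\ell=0}^L 2^{-\ell} P_{\ell,L}P_{\ell,L}^T$, replacing the weights $2^{-\ell}$ with $\bpxcoef$. The key observation is that these weights enter the TT-decomposition only as scalar multipliers in a single designated entry of each core, so neither the block structure nor the TT-ranks are affected by this modification.

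The first step is to recall the TT-decomposition of a single prolongation matrix $P_{\ell,L}$: by iterating the two-level prolongation and using that $P_{\ell,L} = P_{\ell,\ell+1}\cdots P_{L-1,L}$, one obtains a representation built from the cores $A$, $U$, $X$, $P$ defined in the assembly section. The second step is to form $P_{\ell,L} P_{\ell,L}^T$ by applying the mode core product $\bullet$ to the TT-decomposition of $P_{\ell,L}$ and that of its transpose, producing a rank-4 representation in terms of the building blocks $A_b$, $U_b$, $X_b$, $P_b$. The third step is to encode the summation over levels $\ell = 0,\ldots,L$ via the upper-triangular $2\times 2$ block structure of $C_{\ell,L}$, which can be interpreted as a two-state automaton: the $(1,1)$-entry $U_b$ corresponds to \emph{not yet active}, the $(1,2)$-entry $\bpxcoef U_b$ corresponds to \emph{activating level $\ell$ with weight $\bpxcoef$}, and the $(2,2)$-entry $2^{-1}X_b$ propagates an already active contribution through the remaining levels with the correct powers of two. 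The terminal core $[\,;P_b]^T$ forces every admissible path to switch to the active state exactly once, while the leftmost $1\times 2$ core $[A_b,\ \mu_{0,\delta} A_b]$ accounts separately for the case $\ell=0$ (activated immediately) and all later activations. Expanding the strong Kronecker products of these cores then recovers exactly $C_L = \sum_{\ell=0}^L \bpxcoef P_{\ell,L} P_{\ell,L}^T$.

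The rank bound of $8$ follows from the rank-$4$ of each single-level contribution $P_{\ell,L}P_{\ell,L}^T$ (obtained as the $\bullet$ product of two rank-$2$ TT-cores) combined with the factor $2$ from the two-state automaton block structure.

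The main obstacle is the careful bookkeeping: tracking the mode sizes and ranks through the $\bullet$ and $\Join$ products, and verifying that the powers of $2$ arising from the prolongation matrices combine correctly with the $2^{-1}$ factor in the $(2,2)$-entry of $C_{\ell,L}$ so that the emerging coefficient of $P_{\ell,L}P_{\ell,L}^T$ in the expansion is precisely $\bpxcoef$ and not some other scalar. Once this bookkeeping is done for the standard BPX case, transferring it to the $\delta$-dependent weights is immediate, since $\bpxcoef$ enters only as a multiplicative constant in a single scalar slot of each core and commutes with all the tensor operations involved.
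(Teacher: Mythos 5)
Your proposal is correct and follows essentially the same route as the paper: both rely on the single-level representation of $2^{-\ell}P_{\ell,L}P_{\ell,L}^{T}$ from \cite[eq.~(86)]{BachmayrStability}, replace the scalar weight $2^{-\ell}$ by $\bpxcoef$ in the $(1,2)$-slot of the $\ell$-th core, and verify that expanding the strong Kronecker products of the upper-triangular two-state cores recovers $\sum_{\ell=0}^{L}\bpxcoef P_{\ell,L}P_{\ell,L}^{T}$. Your automaton interpretation is a useful explicit unpacking of the paper's terse ``performing the multiplication'' step, and your rank accounting (rank $4$ from the $\bullet$ product of rank-$2$ cores, doubled by the $2\times2$ block structure) matches the stated bound of $8$.
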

\begin{proof} Recall the definition of $C_{L}$
  \begin{equation}\label{CL-def}
    C_{L} = \sum_{\ell=0}^{L} \bpxcoef
P_{\ell,L} P^{T}_{\ell,L}.
\end{equation}
 By equation (86) in \cite[Section 5.4]{BachmayrStability}, the
following representation holds for every $\ell = 0, \ldots ,L$
\begin{equation*}
  2^{-\ell} P_{\ell,L} P^{T}_{\ell,L} = 2^{-\ell} A_{b} \Join
U_{b}^{\Join \ell} \Join (2^{-1} X_{b})^{\Join (L-\ell)} \Join P_{b}.
\end{equation*}
 Thus, our modified expression is
 \begin{equation}
   \label{CL3}
   \bpxcoef P_{\ell,L} P^{T}_{\ell,L} = \bpxcoef A_{b} \Join U_{b}^{\Join \ell} \Join (2^{-1} X_{b})^{\Join (L-\ell)} \Join P_{b}
\end{equation}
 for $\ell = 0, \ldots ,L$. By inserting the expressions in
equation \eqref{CL2} into equation \eqref{CL1}, performing the multiplication
and inserting the expression in equation \eqref{CL3}, we end up with the
expression for $C_{L}$ as given in equation \eqref{CL-def}.
\end{proof}

\begin{lemma}\label{Q-dec}
  For any $L \in \mathbb{N}$ and for $\alpha \in \{
0,1 \}$, the matrix $Q_{L,\alpha}$ admits the TT-decomposition
\begin{equation}\label{QL1}
  Q_{L,\alpha} = [\begin{matrix} A_{b} & \mu_{0,\delta} A_{b}
\Join W_{\alpha} \end{matrix}
 ] \Join Q_{1} \Join \ldots \Join Q_{L} \Join
\left[\begin{matrix} \\ K_{\alpha} \end{matrix}
 \right]
\end{equation}
 with TT-ranks equal to $6$ and where
 \begin{equation}\label{QL2}
   Q_{\ell} = \left[ \begin{matrix} 2^{\frac{1}{2}}U_{b} &
\bpxcoef 2^{\alpha \ell+\frac{1}{2}} U_{b} \Join W_{\alpha} \\ &
2^{\alpha-\frac{1}{2}} Z_{\alpha} \end{matrix}
 \right], \text{ for } \ell = 1,
\ldots ,L.
\end{equation}
\end{lemma}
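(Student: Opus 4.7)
My plan is to mirror the structure of the proof of Lemma \ref{C-dec}, exploiting the level-wise decomposition of the summands of $Q_{L,\alpha}$ that is already available in \cite[Section 5.4]{BachmayrStability} and adjusting only the scaling coefficients from $2^{-\ell/2}$ (or $2^{-\ell}$, depending on the side) to the modified weights $\sqrt{\bpxcoef}$ that appear in our preconditioner. The matrix $Q_{L,\alpha}$ should, by the construction of \cite{BachmayrStability}, admit the representation $Q_{L,\alpha} = \sum_{\ell=0}^{L} \sqrt{\bpxcoef}\, 2^{\alpha\ell/2} P_{\ell,L} D_{\ell,\alpha}$ for an appropriate level-$\ell$ factor $D_{\ell,\alpha}$, and the point is to bundle these summands into a single TT-expression with fixed rank.

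First I would recall from \cite[Section 5.4]{BachmayrStability} the analogue of equation~(86) for $Q_{L,\alpha}$, namely an identity of the form
\begin{equation*}
  2^{-\ell/2} 2^{\alpha\ell/2} P_{\ell,L} D_{\ell,\alpha} \;=\; 2^{-\ell/2} A_{b} \Join U_{b}^{\Join \ell} \Join \bigl(W_{\alpha}\bigr)^{?} \Join \bigl(2^{\alpha-1/2} Z_{\alpha}\bigr)^{\Join(L-\ell)} \Join K_{\alpha},
\end{equation*}
in which the auxiliary cores $W_{\alpha}$, $Z_{\alpha}$, $K_{\alpha}$ encode the $\alpha$-dependent boundary/derivative factor $D_{\ell,\alpha}$. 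Replacing the scalar weight $2^{-\ell/2}$ by $\sqrt{\bpxcoef}$ term by term gives the modified identity
\begin{equation*}
  \sqrt{\bpxcoef}\, 2^{\alpha\ell/2} P_{\ell,L} D_{\ell,\alpha} \;=\; \sqrt{\bpxcoef}\, 2^{\alpha\ell/2} A_{b} \Join U_{b}^{\Join \ell} \Join \bigl(2^{\alpha-\tfrac12} Z_{\alpha}\bigr)^{\Join(L-\ell)} \Join K_{\alpha},
\end{equation*}
valid for every $\ell=0,\dots,L$, since the coefficient only enters multiplicatively and the strong Kronecker product is multilinear in the leftmost boundary block.

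Next I would carry out the block multiplication of the cores \eqref{QL1}--\eqref{QL2}. The upper triangular structure of $Q_{\ell}$ means that, when expanding the strong Kronecker chain $[\,A_{b}\ \ \bpxcoef\text{-scaled }A_{b}\Join W_{\alpha}\,]\Join Q_{1}\Join\cdots\Join Q_{L}\Join[\,\cdot\ ;K_{\alpha}\,]^{T}$, each nonzero path from the top-left entry of the first core to the bottom entry of the closing core corresponds to selecting the upper-left diagonal block $2^{1/2}U_{b}$ in the first $\ell$ cores, then the off-diagonal block $\bpxcoef\, 2^{\alpha\ell+1/2} U_{b}\Join W_{\alpha}$, then the lower-right block $2^{\alpha-1/2}Z_{\alpha}$ in the remaining $L-\ell$ cores. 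Summing over these paths reproduces exactly $\sum_{\ell}\sqrt{\bpxcoef}\,2^{\alpha\ell/2}A_{b}\Join U_{b}^{\Join\ell}\Join(2^{\alpha-1/2}Z_{\alpha})^{\Join(L-\ell)}\Join K_{\alpha}$, where the powers of $2^{1/2}$ accumulated along the chain compensate the factors in $W_\alpha$ and $K_\alpha$.

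The main obstacle will be a careful bookkeeping of the scalar factors: the coefficient $\bpxcoef$ now depends on $\ell$ through $\min(2^{-\ell}\delta^{-1},1)$, and one must verify that placing $\bpxcoef$ (rather than, say, $2^{-\ell/2}$) as a multiplier of the off-diagonal block in $Q_\ell$ still produces the desired weight $\sqrt{\bpxcoef}\,2^{\alpha\ell/2}$ on the $\ell$-th summand once the chain is expanded. The rank count is then immediate from \eqref{QL1}--\eqref{QL2}: the boundary cores have mode-rank $1\times 2$ and $2\times 1$, and each interior core $Q_\ell$, being a $2\times 2$ block of $3\times 3$ slices (the mode core product of a $3$-ranked outer with the inner strong-Kronecker factors), has slice size $6\times 6$, giving TT-rank $6$ as claimed.
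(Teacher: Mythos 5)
The pivotal ingredient of the paper's proof is the identity $Q_{L,\alpha}=M_{L,\alpha}\,C_L$ (equation (33c) in \cite{BachmayrStability}), which immediately gives $Q_{L,\alpha}=\sum_{\ell=0}^{L}\bpxcoef\,M_{L,\alpha}P_{\ell,L}P_{\ell,L}^T$, so the per-level summand carries the \emph{full} coefficient $\bpxcoef$ (not its square root). Your proposal instead postulates an ansatz $Q_{L,\alpha}=\sum_{\ell}\sqrt{\bpxcoef}\,2^{\alpha\ell/2}P_{\ell,L}D_{\ell,\alpha}$, which is not grounded in the construction of \cite{BachmayrStability} and is in fact wrong: the $\sqrt{\bpxcoef}$ and $2^{\alpha\ell/2}$ would be natural if $Q_{L,\alpha}$ carried only ``half'' of the two-sided preconditioner, but it carries a single full copy of $C_L$, hence a single $\bpxcoef$, and equation (88) of \cite{BachmayrStability} gives $2^{\alpha\ell}$, not $2^{\alpha\ell/2}$, once the $(2^{1/2}U_b)^{\Join\ell}$ factors are accounted for.

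Beyond the scalar coefficients, your ``modified identity'' drops the $W_\alpha$ factor that must appear between the $U_b$-chain and the $Z_\alpha$-chain (you marked it $(W_\alpha)^{?}$ and then silently omitted it). This factor is not decorative: it is precisely what shows up in the off-diagonal block $\bpxcoef\,2^{\alpha\ell+1/2}U_b\Join W_\alpha$ of the interior core $Q_\ell$ in \eqref{QL2} and in the second block $\mu_{0,\delta}A_b\Join W_\alpha$ of the boundary core in \eqref{QL1}. If you actually expand the upper-triangular TT chain defined by \eqref{QL1}--\eqref{QL2} path by path, the $\ell$-th summand is $\bpxcoef\,2^{\alpha\ell}A_b\Join(2^{1/2}U_b)^{\Join\ell}\Join W_\alpha\Join(2^{\alpha-1/2}Z_\alpha)^{\Join(L-\ell)}\Join K_\alpha$, which matches the modified (88) only with the corrected coefficients and the $W_\alpha$ insertion; your stated sum formula does not agree with what the given cores produce, so the block-expansion step in your argument would not close. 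To repair the proof, you need to start from $Q_{L,\alpha}=M_{L,\alpha}C_L$, substitute $C_L=\sum_\ell\bpxcoef P_{\ell,L}P_{\ell,L}^T$, apply the per-level identity (88) from \cite{BachmayrStability} with $2^{-\ell}$ replaced by $\bpxcoef$, and then verify that the path expansion of \eqref{QL1}--\eqref{QL2} reproduces exactly that sum.
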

\begin{proof} By equation (33c) in \cite[Section 2.4]{BachmayrStability}, the
following relation hold for $\alpha \in \{ 0,1 \}$:
\begin{equation}\label{QL0}
  Q_{L,\alpha} = M_{L,\alpha} C_{L}.
\end{equation}
 Moreover, by equation (88) in \cite[Section
5.4]{BachmayrStability} the following representation holds for every $\ell = 0,
\ldots ,L$
\begin{equation*}
  2^{-\ell} M_{L,\alpha} P_{\ell,L} P^{T}_{\ell,L} =
2^{-(1-\alpha) \ell} A_{b} \Join (2^{\frac{1}{2}} U_{b})^{\Join \ell} \Join
W_{\alpha} \Join (2^{\alpha - \frac{1}{2}} Z_{\alpha})^{\Join (L-\ell)} \Join
K_{\alpha}
\end{equation*}
 Thus, our modified expression is
 \begin{multline}\label{QL3}
   \bpxcoef M_{L,\alpha} P_{\ell,L}
P^{T}_{\ell,L} = \bpxcoef 2^{\alpha \ell} A_{b} \Join
(2^{\frac{1}{2}} U_{b})^{\Join \ell} \Join W_{\alpha} \Join (2^{\alpha -
\frac{1}{2}} Z_{\alpha})^{\Join (L-\ell)} \Join K_{\alpha}
\end{multline}
 for $\alpha \in \{ 0,1 \}$ and for $\ell = 0, \ldots ,L$. By
inserting the expressions in equation \eqref{QL2} into equation \eqref{QL1}, we
end up with the same expression as when taking the sum over $\ell = 0, \ldots
,L$ of the expression in equation \eqref{QL3}. The result now follows from the
formula in equation \eqref{QL0} and equation \eqref{CL-def}:
\begin{equation*}
  Q_{L,\alpha} = M_{L,\alpha} C_{L} = \sum_{\ell=0}^{L}
\bpxcoef M_{L,\alpha} P_{\ell,L} P_{\ell,L}^{T}.
\end{equation*}
\end{proof}

The explicit TT-decomposition of $\Lambda_{L,1}$ in equation~\eqref{sysMPre} is presented below, while $\Lambda_{L,0}$ remains unchanged compared with~\cite{BachmayrStability}.
\begin{lemma}\label{L-dec}
  For any $L \in \mathbb{N}$ the matrix $\Lambda_{L,1}$ admits the TT-decomposition
\begin{equation}\label{LL1}
  \Lambda_{L,1} = \Lambda_{L,0} \Join
\Lambda_{L,1,1} \Join \ldots \Join \Lambda_{L,L,1} \Join
\Lambda_{L,L+1,1}
\end{equation}
where $\Lambda_{L,\ell,1}=\frac{\delta^{2/L}}{2} I$, for $\ell = 1, \ldots ,L$, are
TT-cores of ranks $1 \times 1$ and of mode sizes $2 \times 2$ and $\Lambda_{L,L+1,1}=1$ is a TT-core of ranks $1 \times 1$ and
of mode sizes $1 \times 1$.
\end{lemma}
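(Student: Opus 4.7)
The plan is to derive the TT-decomposition of $\Lambda_{L,1}$ by modifying the corresponding construction in \cite{BachmayrStability} to absorb the singular perturbation factor $\delta^2$. The unmodified preconditioner in \cite{BachmayrStability} targets $\tilde A_L = \tilde R_L + \tilde S_L$, whereas our setting has $\tilde A_L = \tilde R_L + \delta^2 \tilde S_L$, so in the splitting $B_L = Q_{L,0}^T \Lambda_{L,0} Q_{L,0} + Q_{L,1}^T \Lambda_{L,1} Q_{L,1}$ only the stiffness-associated factor $\Lambda_{L,1}$ needs to be multiplied by $\delta^2$ relative to its analogue in \cite{BachmayrStability}.

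First I would recall from Section~5.4 of \cite{BachmayrStability} the explicit TT-decomposition of the unperturbed $\Lambda_{L,1}$, which already has the form of a strong Kronecker chain $\Lambda_{L,0} \Join \Lambda_{L,1,1}^{\text{orig}} \Join \cdots \Join \Lambda_{L,L,1}^{\text{orig}} \Join \Lambda_{L,L+1,1}$, where each middle core is a scalar multiple of the $2\times 2$ identity of TT-rank $1\times 1$ and the last core is a scalar closing the chain. Reading off the exact scalar in each middle core then reveals the normalization $1/2$ per core, giving the total $2^{-L}$ prefactor which, together with $\Lambda_{L,0}$, reproduces the correct diagonal in $\Lambda_{L,1}^{\text{orig}}$.

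Second, to insert the $\delta^2$ factor I use that multiplication of a TT-matrix by a scalar can be redistributed across its cores. Writing $\delta^2 = (\delta^{2/L})^L$ and replacing each of the $L$ middle cores by $\frac{\delta^{2/L}}{2} I$ produces exactly the required extra $\delta^2$ in front of $\Lambda_{L,1}^{\text{orig}}$, without changing any ranks: since each inserted core has TT-ranks $1\times 1$ and mode sizes $2\times 2$, the $\Join$ with the last core of $\Lambda_{L,0}$ (also of right rank $1$) and with the $1\times 1$ closing core $\Lambda_{L,L+1,1}=1$ is compatible and amounts slice-wise to scalar multiplication. A direct verification then checks that the resulting strong Kronecker product reconstructs the diagonal matrix associated with the modified stiffness contribution $\delta^2 \tilde S_L$ after BPX preconditioning.

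The main obstacle will be the bookkeeping of the strong Kronecker product at the junction between $\Lambda_{L,0}$ and the first inserted core, together with checking that the even distribution $\delta^{2/L}$ per core is consistent with the level-wise weights $\bpxcoef$ already built into $C_L$ and $Q_{L,1}$. Once one observes that the level-dependent weights have already been moved into $C_{\ell,L}$ and $Q_\ell$ in Lemmas~\ref{C-dec} and~\ref{Q-dec}, the factor $\delta^2$ is the only remaining global scalar to place into $\Lambda_{L,1}$, and the uniform distribution $\delta^{2/L}/2$ across the $L$ mode cores is the natural, rank-preserving way to realize it.
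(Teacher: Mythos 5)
Your proposal matches the paper's proof: both start from the explicit TT-decomposition of the unperturbed $\Lambda_{L,1}$ in \cite[Section~5.4]{BachmayrStability} (where the $L$ middle cores are $\tfrac12 I$), observe that Lemmas~\ref{C-dec} and~\ref{Q-dec} deliberately omit the $\delta^2$ factor in $\delta^2 S_L + R_L$, and then absorb it into $\Lambda_{L,1}$ by distributing $\delta^2 = (\delta^{2/L})^L$ uniformly across the $L$ mode cores, which preserves the $1\times 1$ ranks. The identification of $\Lambda_{L,1}$ (and not $\Lambda_{L,0}$) as the stiffness-associated factor and the redistribution of a global scalar over strong Kronecker factors are exactly the two ingredients the paper uses.
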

\begin{proof} By straightforward computation of the quantities in equations
(90c) and (90c) in \cite[Section 5.4]{BachmayrStability} corresponding to the
system matrix given by $S_{L} + R_{L}$ (instead of $\delta^{2} S_{L} + R_{L}$),
we end up with equation \eqref{LL1} except that the intermediate TT-cores
$\Lambda_{L,\ell,1}$, for $\ell = 1, \ldots ,L$, are given by $\frac{1}{2} I$
(instead of $\frac{\delta^{2/L}}{2} I$). Lemma \ref{C-dec} and Lemma
\ref{Q-dec} do not include the factor of $\delta^{2}$ in the system matrix
$\delta^{2} S_{L} + R_{L}$. Clearly,
\begin{equation*}
  (\delta^{2/L})^{L} = \delta^{2},
\end{equation*}
and thus we obtain the decomposition for $\Lambda_{L,\alpha}$ of
the system matrix $\delta^{2} S_{L} + R_{L}$ by multiplying the obtained
expression for $\Lambda_{L,\ell,1}$, $\ell = 1, \ldots ,L$, with $\delta^{2/L}$.
This yields the desired decomposition as given in equation \eqref{LL1}.
\end{proof}
Lemmas \ref{C-dec}, \ref{Q-dec}, and \ref{L-dec} provide a complete explicit TT-representation of the
preconditioned system matrix $B_{L}$ in equation \eqref{sysMPre}.

\subsection{Application of preconditioner}
The preconditioner in the previous
subsection was introduced for Dirichlet-Neumann boundary value problems with
homogeneous boundary conditions. We show here how to apply it to the
  Dirichlet-Dirichlet case numerically approximated in Section \ref{sc:TT-numerics}. We suppose then, for
  ease of exposition, that
  the reaction coefficient is constant
  $c(x) \equiv \overline{c}\in \mathbb{R}$ for all $x\in I$. The first step is to instead consider the
following problem with homogeneous boundary conditions
\begin{equation}\label{diffEqMod}
  \begin{aligned}
    - \delta^{2} v_{\delta}'' + c v_{\delta} &= f - cg,\text{ in } (0,1)
\\ v_{\delta}(0) = 0,\ v_{\delta}(1)&=0,
\end{aligned}
\end{equation}
where $g(x) \coloneqq \alpha_{1} x - \alpha_{0} (x-1)$, whose
solution relates to the solution of \eqref{DiffEq} as $v_{\delta}(x) =
u_{\delta}(x)- g(x)$. 
We also introduce the corresponding Dirichlet-Neumann
problem
\begin{equation}\label{DNdiffEqMod}
\begin{aligned} - \delta^{2} v_{\delta}'' + c v_{\delta} &= f - c g,\text{ in }
(0,1) \\ v_{\delta}(0) = 0,\ v_{\delta}'(1)&=0,
\end{aligned}
\end{equation}
for which we have introduced the preconditioner $C_{L}$~\eqref{eq:prec_perturbed}. 

We use the well-known Sherman-Morrison formula \cite{Sherman-Morrison-general} to transfer the solution of the
preconditioned discretization of \eqref{DNdiffEqMod} to the case with
Dirichlet-Dirichlet boundary conditions as in \eqref{diffEqMod}.
\begin{theorem}[{\cite{Sherman-Morrison}}]\label{SM-theorem}
  Let $B \in
\mathbb{R}^{n \times n}$ be an invertible matrix and let $u,v \in
\mathbb{R}^{n}$ be such that also $B + u v^{T}$ is invertible. Then the inverse
of $B + u v^{T}$ is given by
\begin{equation}\label{SM-formula}
  (B + u v^{T})^{-1} = B^{-1} - \frac{B^{-1} u
v^{T} B^{-1}}{1 + v^{T} B^{-1} u}
\end{equation}
\end{theorem}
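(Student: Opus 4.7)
The plan is to prove this by direct verification: compute the product of $B + uv^T$ with the candidate inverse on the right-hand side of \eqref{SM-formula} and check that it equals the identity. This is the standard textbook argument and suits the appendix context, where the formula is invoked as a tool rather than a substantive new result.

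First, I would observe that the right-hand side of \eqref{SM-formula} is well defined, i.e., the scalar denominator $1 + v^T B^{-1} u$ is nonzero. This follows from the matrix determinant lemma $\det(B + uv^T) = \det(B)(1 + v^T B^{-1} u)$: since both $B$ and $B + uv^T$ are assumed invertible, both determinants are nonzero, forcing $1 + v^T B^{-1} u \neq 0$. This step could alternatively be skipped with a quick direct argument: if $1 + v^T B^{-1} u = 0$, then $(B + uv^T)(B^{-1} u) = u + u(v^T B^{-1} u) = u(1 + v^T B^{-1} u) = 0$, with $B^{-1} u \neq 0$ (as $B^{-1}$ is invertible and, unless $u = 0$, in which case the formula is trivial, we get a nonzero vector), contradicting invertibility of $B + uv^T$.

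Next, I would carry out the verification. Denote $\alpha := 1 + v^T B^{-1} u \in \mathbb{R}$ and expand
\begin{equation*}
    (B + uv^T)\Bigl(B^{-1} - \tfrac{1}{\alpha} B^{-1} u v^T B^{-1}\Bigr)
    = I + uv^T B^{-1} - \tfrac{1}{\alpha}\bigl(uv^T B^{-1} + uv^T B^{-1} u v^T B^{-1}\bigr).
\end{equation*}
The key manipulation is that $v^T B^{-1} u$ is a scalar, so it can be pulled out of the rightmost bracket: $uv^T B^{-1} u v^T B^{-1} = (v^T B^{-1} u)\, uv^T B^{-1}$. Thus the parenthesized expression equals $(1 + v^T B^{-1} u)\, uv^T B^{-1} = \alpha\, uv^T B^{-1}$, and the whole product collapses to $I + uv^T B^{-1} - uv^T B^{-1} = I$. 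Multiplication on the other side is symmetric and could be checked analogously, or one can simply invoke that a one-sided inverse of a square matrix is a two-sided inverse.

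There is essentially no obstacle here: the only subtlety is the scalar factoring step that exploits $v^T B^{-1} u \in \mathbb{R}$, and the only hypothesis used nontrivially is invertibility of $B + uv^T$, which guarantees $\alpha \neq 0$. Since the theorem is classical and cited to \cite{Sherman-Morrison}, I would keep the proof as a short verification block rather than dwelling on commentary.
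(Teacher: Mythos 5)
Your verification is correct: the denominator argument (either via the determinant lemma or the direct kernel argument), the scalar factoring of $v^T B^{-1} u$, and the collapse to the identity are all sound, and invoking that a one-sided inverse of a square matrix is two-sided closes the argument. The paper itself gives no proof of this statement --- it is quoted as a classical result from the cited reference --- so there is nothing to compare against; your direct expansion is the standard textbook argument and is entirely adequate here.
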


The following straightforward corollary suggests how to apply, in practice, the Sherman-Morrison formula for solving linear systems.
\begin{corollary}[{\cite[Corollary 2]{Sherman-Morrison}}]\label{SM-cor}
  Let $B
\in \mathbb{R}^{n \times n}$ and let $u,v,y \in \mathbb{R}^{n}$. Suppose that
$B$ and $B + uv^{T}$ are invertible matrices and suppose that $x_{1} \in
\mathbb{R}^{n}$ satisfies $B x_{1} = y$ and that $x_{2} \in \mathbb{R}^{n}$
satisfies $B x_{2} = u$. Then
\begin{equation}\label{SM-sol}
  x_{3} \coloneqq x_{1} - \frac{v^{T} x_{1}}{1 +
v^{T} x_{2}} x_{2}
\end{equation}
satisfies $(B + u v^{T}) x_{3} = y$.
\end{corollary}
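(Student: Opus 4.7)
The plan is to deduce this corollary as an immediate consequence of the Sherman-Morrison formula \eqref{SM-formula} in Theorem~\ref{SM-theorem}. First I would apply \eqref{SM-formula} to the vector $y$, which yields
\[
(B + uv^{T})^{-1} y = B^{-1} y - \frac{B^{-1} u \, v^{T} B^{-1} y}{1 + v^{T} B^{-1} u}.
\]
Using the definitions $x_{1} = B^{-1} y$ and $x_{2} = B^{-1} u$ (both well-defined by the invertibility of $B$), the right-hand side simplifies to
\[
x_{1} - \frac{v^{T} x_{1}}{1 + v^{T} x_{2}}\, x_{2},
\]
which is exactly $x_{3}$. Multiplying both sides by $B + uv^{T}$ gives $(B + uv^{T}) x_{3} = y$, as required.

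As an alternative that bypasses the explicit inverse formula, I would verify the identity directly by expanding $(B + uv^{T}) x_{3}$. The plan is to substitute $Bx_{1} = y$ and $Bx_{2} = u$ to rewrite all $B$-products, then collect the coefficients of $u$. The remaining scalar computation is
\[
v^{T} x_{1} - \frac{v^{T} x_{1}}{1 + v^{T} x_{2}}\,(1 + v^{T} x_{2}) = 0,
\]
so the correction term cancels the spurious $uv^{T}x_{1}$ contribution exactly, and only $y$ remains.

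Neither route has a substantial obstacle; the statement is essentially a restatement of Sherman-Morrison applied to a linear system. The only point requiring minor attention is that the denominator $1 + v^{T} x_{2}$ must be nonzero: this is equivalent, given the invertibility of $B$, to the invertibility of $B + uv^{T}$, which is assumed in the hypothesis. Hence the definition of $x_{3}$ is unambiguous and the conclusion follows.
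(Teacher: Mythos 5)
Your proof is correct, and it takes the same route the paper implicitly relies on by citing Theorem~\ref{SM-theorem}: apply the Sherman--Morrison formula \eqref{SM-formula} to $y$ and substitute $x_1 = B^{-1}y$, $x_2 = B^{-1}u$. The paper states the corollary as a citation without reproducing the proof, so there is nothing substantive to compare beyond noting that your argument (including the observation that $1 + v^{T}x_{2} \neq 0$ follows from the invertibility hypotheses via the matrix determinant lemma) is the standard one.
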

We may express $A_L$, in terms of a rescaling of the parts of $\widetilde{A}_{L}$ (since the Dirichlet-Dirichlet and Dirichlet-Neumann cases have different mesh sizes) and of a rank-one correction term.
Therefore, the solution of the singularly perturbed
problem with homogeneous Dirichlet boundary conditions can be obtained using
Corollary \ref{SM-cor}.
In the case where $c$ is nonconstant, then the preconditioned
    solution for the Dirichlet-Dirichlet case can be obtained in a similar
    fashion, provided that the integrals arising in $\Lambda_{L,0}$ are
    computed using the grid $\TunifintL$.

\bibliographystyle{amsalpha-abbrv}
\bibliography{bibliography}

\end{document}